\documentclass[12pt,centertags,oneside, reqno]{amsart}
\usepackage{ifpdf}
%%%%%%%%%%%%%%%%%%%%
%\usepackage{comment}
%%%%%%%%%%%%%%%%%%%%

\usepackage{amssymb}
\usepackage{braket}
\usepackage[all]{xy}
\usepackage{cancel}

\usepackage{amscd,amsxtra,calc}
\usepackage{cmmib57}
\usepackage{url}
\usepackage{ulem}
\usepackage{xcolor}
\setcounter{MaxMatrixCols}{25}

\usepackage[a4paper,width=16.2cm,top=3cm,bottom=3cm]{geometry}

\numberwithin{equation}{section}

\setcounter{MaxMatrixCols}{25}

%%%%%%%%%%%%%%%%%%%% Text italic %%%%%%%%%%%%%%%%%%%%%%%%%%%%
\theoremstyle{plain}
\newtheorem{thm}{Theorem}[section]
\newtheorem{theorem}[thm]{Theorem}

\newtheorem{lemma}[thm]{Lemma}

\newtheorem{proposition}[thm]{Proposition}
%%%%%%%%%%%%%%%%%%%% Text roman %%%%%%%%%%%%%%%%%%%%%%%%%%%%%
\theoremstyle{definition}
\newtheorem{remark}[thm]{Remark}

\newtheorem{definition}[thm]{Definition}

\newtheorem{example}[thm]{Example}

\newtheorem{conjecture}[thm]{Conjecture}

\numberwithin{equation}{section}
%%%%%%%% Special symbols %%%%%%%%%%%%%%%%%%%%%%%%%%%%%%%
% Skriptbuchstaben

%%%%%%%%%%%%%%%%%%%%%%%%%%%%%%%%%%%%%%%%%%%%%%%%%%%%%%%%%%%%%%

%\newcommand{\red}{{\rm red}}

% Skriptbuchstaben

\newcommand{\sE}{{\mathcal E}}

\newcommand{\sO}{{\mathcal O}}

% Sonderbuchstaben mit Doppellinie

\newcommand{\C}{{\mathbb C}}

\renewcommand{\P}{{\mathbb P}}
\newcommand{\Q}{{\mathbb Q}}
\newcommand{\R}{{\mathbb R}}

\newcommand{\Z}{{\mathbb Z}}

\newcommand{\id}{{\rm id\hspace{.1ex}}}

\newcommand{\Aut}{{\rm Aut\hspace{.1ex}}}

%%%%%%%%%%%%%%%%%%%%%%%%%%%%%%%%%%%%%%%%%%%%%%%%%%%%%%%%%%%%%%

\title [Variety of Ueno type]{Endomorphisms of a variety of Ueno type and Kawaguchi-Silverman Conjecture}

\author{Keiji Oguiso}
\address{Mathematical Sciences, the University of Tokyo, Meguro Komaba 3-8-1, Tokyo, Japan, and National Center for Theoretical Sciences, Mathematics Division, National Taiwan University, 
Taipei, Taiwan}
\email{oguiso@ms.u-tokyo.ac.jp}

\thanks{The author is partially supported by JSPS Grant-in-Aid (A) 15H05738, JSPS Grant-in-Aid (B) 15H03611 and NCTS scholar program.}
%\dedicatory{}
\subjclass[2010]{14J20, 14J40, 14J50}

\begin{document}

\maketitle

\begin{abstract}
We first show that the monoid of separable surjective self morphisms of a variety of Ueno type coincides with the group of automorphisms. We also give an explicit description of the automorphism group. As applications, we confirm Kawaguchi Silverman Conjecture for automorphisms of a variety of Ueno type and some Calabi-Yau threefolds, defined over $\overline{\Q}$. 
\end{abstract}

\section{Introduction}\label{sect1}

Throughout this paper, $K = \overline{K}$ is an algebraically closed field of characteristic $p \not= 2, 3$, i.e., $p=0$ or $p \ge 5$. We are mainly interested in the cases $K = \overline{\Q}$ and $\C$ for applications. We set $\zeta_k := {\rm exp}\, (2\pi\sqrt{-1}/k)$ for $k \in \Z_{\ge 1}$ when $K = \overline{\Q} (\subset \C)$ and $\C$. Our main results are Theorems \ref{mainthm1}, \ref{mainthm2}, \ref{mainthm3}, \ref{thm42}.

\subsection{Varieties of Ueno type and the main theorem}\label{subsect11}  
\hfill

We define varieties of Ueno type, which play the most crucial roles in this paper:

\begin{definition}\label{def1} Let $n \ge 2$ be an integer and $\zeta_k \in K$ a primitive $k$th root of $1$ with $k \in \{2,3,4\}$.
\begin{enumerate}

\item Let $E_2 = E_{2, \lambda}$ be the elliptic curve defined by the Weierstass equation 
$$y^2 = x(x-1)(x - \lambda)\,\, {\rm where}\,\,  \lambda \in K \setminus\{0, 1\}.$$
Define $\sigma_2 \in \Aut(E_2)$ of order $2$ by
$$\sigma_2 : (x, y) \mapsto (x, -y).$$
We set 
$$A_{2}^n := E_2^{\times n}\,\, ,\,\, \sigma_{2}^n := \sigma_2 ^ {\times n} \in \Aut (A_{2}^n),$$ 
the self product of $E_2$ of $n$ times and the automorphism of $A_2^n$ of order $2$ given by the product of $\sigma_2$. Let 
$$\pi_{2}^{n} : A_{2}^n \to \overline{X}_{2}^{n}  := A_{2}^{n}/\langle \sigma_{2}^{n} \rangle$$
be the quotient morphism and
$$\nu_{2}^{n} : X_{2}^{n} \to \overline{X}_{2}^{n}$$
be the blow up of $\overline{X}_{2}^{n}$ at the maximal ideals of the singular points ($4^n$ points), which are of type $\frac{1}{2}(1, \ldots, 1)$. Then $X_{2}^{n}$ is a smooth projective variety of dimension $n$. 

\item Let $E_4$ be the elliptic curve defined by the Weierstass equation 
$$y^2 = x(x^2-1).$$ 
Define $\sigma_4 \in \Aut(E_4)$ of order $4$ by
$$\sigma_4 : (x, y) \mapsto (-x, \zeta_4y).$$
We set 
$$A_{4}^n := E_4^{\times n}\,\, ,\,\, \sigma_{4}^n := \sigma_4 ^ {\times n} \in \Aut (A_{4}^n),$$ 
the self product of $E_4$ of $n$ times and the automorphism of $A_4^n$ of order $4$ given by the product of $\sigma_4$. Let 
$$\pi_{4}^{n} : A_{4}^n \to \overline{X}_{4}^{n}  := A_{4}^{n}/\langle \sigma_{4}^{n} \rangle$$
be the quotient morphism and
$$\nu_{4}^{n} : X_{4}^{n} \to \overline{X}_{4}^{n}$$
be the blow up of $\overline{X}_{4}^{n}$ at the maximal ideals of the singular points, which are of type $\frac{1}{4}(1, \ldots, 1)$ ($2^n$ points) or of type $\frac{1}{2}(1, \ldots, 1)$ ($(4^n -2^n)/2$ points) . Then $X_{4}^{n}$ is a smooth projective variety of dimension $n$. 

\item Let $E_3 = E_6$ be the elliptic curve defined by the Weierstass equation 
$$y^2 = x^3-1.$$ 
Define $\sigma_3 \in \Aut(E_3)$ of order $3$, $\sigma_6 \in \Aut(E_3)$ of order $6$ by
$$\sigma_3 : (x, y) \mapsto (\zeta_3 x, y)\,\, ,\,\, \sigma_6 : (x, y) \mapsto (\zeta_3 x, -y).$$
We set 
$$A_{3}^n = A_{6}^{n}:= E_3^{\times n}\,\, ,\,\, \sigma_{3}^n := \sigma_3 ^ {\times n} \in \Aut (A_{3}^n)\,\, ,\,\, \sigma_{6}^n := \sigma_6 ^ {\times n} \in \Aut (A_{6}^n),$$  
the self product of $E_3$ of $n$ times and the automorphisms of $A_3^n$ and $A_6^n$ of order $3$ and of order $6$ given by the product of $\sigma_3$ and $\sigma_6$. Let 
$$\pi_{3}^{n} : A_{3}^n \to \overline{X}_{3}^{n}  := A_{3}^{n}/\langle \sigma_{3}^{n} \rangle$$
be the quotient morphism and
$$\nu_{3}^{n} : X_{3}^{n} \to \overline{X}_{3}^{n}$$
be the blow up of $\overline{X}_{3}^{n}$ at the maximal ideals of the singular points, which are of type $\frac{1}{3}(1, \ldots, 1)$ ($3^n$ points). Similarly, we let 
$$\pi_{6}^{n} : A_{6}^n \to \overline{X}_{6}^{n}  := A_{6}^{n}/\langle \sigma_{6}^{n} \rangle$$
be the quotient morphism and
$$\nu_{6}^{n} : X_{6}^{n} \to \overline{X}_{6}^{n}$$
be the blow up of $\overline{X}_{6}^{n}$ at the maximal ideals of the singular points, which are of type either $\frac{1}{6}(1, \ldots, 1)$ ($1$ point), $\frac{1}{3}(1, \ldots, 1)$ ($(3^n -1)/2$ points).
 or $\frac{1}{2}(1, \ldots, 1)$ ($(4^n -1)/3$ points) . Then $X_{3}^{n}$ and $X_{6}^n$ are smooth projective varieties of dimension $n$. 
\end{enumerate}
We call $X_k^n$ ($k \in \{2, 3, 4, 6\}$) constructed here a {\it variety of Ueno type} if $n \ge 3$. 
\end{definition}

\begin{remark} \label{rem1} 
\begin{enumerate}
\item Let $n=1$. Then $X_k^1$ are $\P^1$ for all $k \in \{2,3,4,6\}$.
\item Let $n=2$. Then $X_2^2$ is nothing but the Kummer K3 surface ${\rm Km}\, (E_2 \times E_2)$, while $X_4^2$, $X_3^2$, $X_6^2$ are rational surfaces. One can check the rationality by using Castelnouvo's criterion of rationality of surfaces or we may also use \cite[Theorem 2]{KL09}.
\item Let $n \ge 3$. Then the Kodaira dimension of $X_{k}^n$ is $\kappa (X_k^n) = -\infty$ exactly when 
$$(n, k) = (3, 4), (3, 6), (4, 6), (5, 6),$$
while $\kappa (X_k^n) = 0$ otherwise. One can check this directly by the canonical bundle formula or again by \cite[Theorem 2]{KL09}.
\end{enumerate}
\end{remark}
Varieties of Ueno type are intensively studied by \cite[Section 16]{Ue75} in connection with the existence of minimal models in pre-minimal model problem era. Varieties of Ueno type are very special but they also provide several concrete examples of smooth projective varieties with remarkable birational-geometric and algebro-dynamical properties as in the following remark:

\begin{remark} \label{rem2} Let $K = \C$ and $n \ge 3$. 
\begin{enumerate}
\item $X_3 := X_3^3$ is a rigid Calabi-Yau threefold (\cite{Be83}). $X_3$ and $X_7$ (see Subsection \ref{subsect41} for the definition of $X_7$) play important roles in the classification of Calabi-Yau threefolds, in the sense that they are exactly two Calabi-Yau threefolds with a birational $c_2$-contraction \cite{OS01}. $X_3$ is the unique underlying Calabi-Yau threefold with an abelian fibration with a relative automorphism of positive entropy \cite{Og24}. $X_3^3$ is only one known example of a Calabi-Yau threefold admitting a primitive automorphism of positive entropy \cite{OT15}. See also \cite{Be83}, \cite{LO09} and so on for other interesting properties of $X_3$.

\item $X_k^n$ is of Kodaira dimension $-\infty$ (Remark \ref{rem1} (3)) if and only if $X_k^n$ is uniruled if and only if $X_k^n$ is rationally connected. See \cite[Theorem 2]{KL09} for the first equivalence and see \cite[Corolarry 4.6 and its proof]{Og19} for the second equivalence whose proof is based on algebraic dynamics. Moreover, it is also shown that $X_6^3$ and $X_4^3$ are rational by \cite{OT15}, \cite{CT15}, and that $X_6^4$ is unirational by \cite{COV15}. Again these $X_k^n$ and ${\rm Hilb}^n\,(S)$ for some smooth projective rational surface $S$ with an automorphism of positive entropy are only known examples of a smooth rationally connected variety with a primitive automorphism of positive entropy (\cite{Og19} and references therein). See also \cite[Subsection 4.2]{DLOZ22} for other interesting dynamical properties on $X_k^n$ in this range. 

\item It is interesting to see if $X_{6}^{4}$ and $X_{6}^{5}$ are rational or not. It is also very interesting to see if a smooth projective rationally connected variety $X_{6}^{5}$ is actually unirational or not. See \cite{Ko96} about basics and several open problems on these three basic classes of varieties; rational, unirational, rationally connected. 
\end{enumerate}
\end{remark}

For a normal projective variety $V$, we denote by ${\rm End}\, (V)$ the semi-group consisting of a separable surjective self morphism $f : V \to V$ of $V$. We say that $f$ is separable if the finite field extension $K(V) \to K(V)$ induced from $f^*$ is separable, which is automatical if $K$ is of characteristic $0$. Note that $\Aut (V) \subset {\rm End}\, (V)$. We call an element of ${\rm End}\, (V)$ an endomorphism of $V$. 

Our main theorem of this paper is the following:

\begin{theorem}\label{mainthm1} Let $X_k^n$ be a variety of Ueno type with $n \ge 3$. Then
$${\rm End}\, (X_k^n) = \Aut (X_k^n).$$
Moreover, 
$$\Aut (X_2^n) = (E_2^{\times n})^{\langle \sigma_2^n \rangle} \rtimes \Aut_{{\rm group}}(E_2^{\times n})/\langle \sigma_2^n \rangle \simeq (\Z/2 \oplus \Z/2)^{\oplus n} \rtimes {\rm GL}\, (n, \Z[\zeta_k])/\langle \sigma_2^n \rangle,$$
where $k=4$ if $E_2 \simeq E_4$, $k=3$ if $E_2 \simeq E_3 = E_6$ and $k=2$ otherwise, 
$$\Aut (X_4^n) = (E_4^{\times n})^{\langle \sigma_4^n \rangle} \rtimes \Aut_{{\rm group}}(E_4^{\times n})/\langle \sigma_4^n \rangle \simeq (\Z/2)^{\oplus n} \rtimes {\rm PGL}\, (n, \Z[\zeta_4]), $$
$$\Aut (X_3^n) = (E_3^{\times n})^{\langle \sigma_3^n \rangle} \rtimes \Aut_{{\rm group}}(E_3^{\times n})/\langle \sigma_3^n \rangle \simeq (\Z/3)^{\oplus n} \rtimes {\rm GL}\, (n, \Z[\zeta_3])/\langle \sigma_3^n \rangle, $$
$$\Aut (X_6^n) = (E_6^{\times n})^{\langle \sigma_6^n \rangle} \rtimes \Aut_{{\rm group}}(E_6^{\times n})/\langle \sigma_6^n \rangle \simeq {\rm PGL}\, (n, \Z[\zeta_6]).$$
\end{theorem}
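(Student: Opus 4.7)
The plan is to lift any $f\in{\rm End}(X_k^n)$ to a $\langle\sigma_k^n\rangle$-equivariant endomorphism $\tilde f$ of the abelian variety $A_k^n$ and then exploit the pure codimension-one nature of pullbacks of Cartier divisors on the smooth variety $X_k^n$ to force the group-theoretic part of $\tilde f$ to be an automorphism.

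First I would descend $f$ to $\overline{X}_k^n$. Since $\Sing\,\overline{X}_k^n$ has codimension $\geq 3$ and $\overline{X}_k^n$ is normal and proper, the rational map $\bar f = \nu_k^n\circ f\circ (\nu_k^n)^{-1}$, defined on the codim-$\geq 3$ complement of the singular locus, extends to a finite morphism $\bar f:\overline{X}_k^n\to\overline{X}_k^n$ with $\bar f\circ\nu_k^n=\nu_k^n\circ f$. For each exceptional $E_i\simeq\P^{n-1}$, the image $f(E_i)$ is $(n-1)$-dimensional, and $\nu_k^n(f(E_i))=\bar f(q_i)$ where $q_i=\nu_k^n(E_i)$; since $f(E_i)$ is not a point, $\bar f(q_i)$ must be singular and $f(E_i)=E_j$ for some $j$. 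An analogous extension through the cover $\pi_k^n:A_k^n\to\overline{X}_k^n$ (\'etale outside the codim-$\geq 3$ fixed locus) lifts $\bar f$ to a $\langle\sigma_k^n\rangle$-equivariant morphism $\tilde f:A_k^n\to A_k^n$. By rigidity for morphisms of abelian varieties, $\tilde f(x)=\phi(x)+a_0$ with $\phi$ a group endomorphism; using $\sigma_k=[\zeta_k]\in{\rm End}(E_k)$ (or $[-1]$ when $k=2$), the equivariance forces $\phi$ to commute with $\sigma_k^n$, hence to be $\Z[\zeta_k]$-linear when $k=3,4,6$, and $a_0\in (A_k^n)^{\langle\sigma_k^n\rangle}$. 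Separability of $f$ makes $\phi$ an \'etale isogeny of some degree $d$.

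The heart of the proof, and the step I expect to be the only real obstacle, is to show $d=1$. The key observation: the reduced exceptional divisor $E=\sum_i E_i$ is Cartier on the smooth variety $X_k^n$, so $f^*E$ is a nonzero effective Cartier divisor whose support $f^{-1}(E)=(\nu_k^n)^{-1}\bigl(\bar f^{-1}(\Sing\,\overline{X}_k^n)\bigr)$ is purely of codimension one. If $\bar f^{-1}(\Sing\,\overline{X}_k^n)$ contained a smooth point $p$ of $\overline{X}_k^n$, the isolated point $(\nu_k^n)^{-1}(p)$ would contribute a codimension-$n$ component to $f^{-1}(E)$, contradicting pure codimension one. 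Therefore $\bar f^{-1}(\Sing\,\overline{X}_k^n)=\Sing\,\overline{X}_k^n$ set-theoretically, which via $\pi_k^n$ pulls back to $\tilde f^{-1}\bigl((A_k^n)^{\langle\sigma_k^n\rangle}\bigr)=(A_k^n)^{\langle\sigma_k^n\rangle}$. Since $\tilde f$ is \'etale of degree $d$ and $(A_k^n)^{\langle\sigma_k^n\rangle}$ is a finite set of cardinality $N>0$, counting preimages gives $dN=N$, so $d=1$. Thus $\phi$ is an automorphism, $\tilde f$ is an affine automorphism of $A_k^n$, and $f\in\Aut(X_k^n)$, establishing ${\rm End}(X_k^n)=\Aut(X_k^n)$.

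For the explicit description of $\Aut(X_k^n)$, running the construction in reverse shows that every pair $(\phi,a_0)$ with $\phi\in\Aut_{\rm group}(A_k^n)$ commuting with $\sigma_k^n$ and $a_0\in(A_k^n)^{\langle\sigma_k^n\rangle}$ defines an affine automorphism of $A_k^n$ that descends to $\overline{X}_k^n$ and lifts to $X_k^n$ (preservation of $\Sing\,\overline{X}_k^n$ is automatic once $\phi$ is invertible). The kernel of the resulting parametrization is the diagonal copy of $\langle\sigma_k^n\rangle$ sitting inside $\Aut_{\rm group}(E_k^n)$ as $[\zeta_k]^{\otimes n}$, giving $\Aut(X_k^n)=(A_k^n)^{\langle\sigma_k^n\rangle}\rtimes\bigl(\Aut_{\rm group}(A_k^n)/\langle\sigma_k^n\rangle\bigr)$. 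Identifying $\Aut_{\rm group}(E_k^n)={\rm GL}(n,{\rm End}(E_k))$, with ${\rm End}(E_k)=\Z[\zeta_k]$ for the CM cases ($k=3,4$, and also $k=2$ when $E_2\simeq E_3$ or $E_4$; ${\rm End}(E_2)=\Z$ generically) then yields the four displayed formulas. The rest is careful book-keeping with the covering $\pi_k^n$ and the CM endomorphism rings of the curves $E_k$.
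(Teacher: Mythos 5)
Your proposal collapses at the very first step: the claim that the rational map $\overline{f}=\nu\circ f\circ\nu^{-1}$, defined on the complement of $\Sing\,\overline{X}_k^n$, automatically extends to a finite morphism because that complement has codimension $\ge 3$ in a normal proper variety. This is false in general — the standard Cremona involution of $\P^2$ is defined outside three points and does not extend. The extension here is \emph{equivalent} to the statement that $\nu\circ f$ contracts each $\sE_i$ to a point, i.e.\ that $f({\rm Exc}(\nu))\subset{\rm Exc}(\nu)$, and that is precisely the content of the paper's key Lemma \ref{lem22}: any non-constant morphism $\P^{n-1}\to X_k^n$ has image equal to some $\sE_i$. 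That lemma is proved using the topological rigidity of $\P^{n-1}$ (no nontrivial finite \'etale covers, so the preimage in $A_k^n$ splits; trivial Albanese, so any map to $A_k^n$ is constant) to exclude images disjoint from ${\rm Exc}(\nu)$, and then a connected-curve argument which crucially needs $n\ge 3$ (indeed the lemma fails for $n=2$, where $X_k^2$ carries many other rational curves). Your subsequent derivation ``$f(\sE_i)$ is $(n-1)$-dimensional and $\nu(f(\sE_i))=\overline f(q_i)$ is a point, hence $f(\sE_i)=\sE_j$'' presupposes the commutative square $\nu\circ f=\overline f\circ\nu$ and is therefore circular: you use the extension to prove the fact that is needed to justify the extension.

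Once Lemma \ref{lem22} (or an equivalent) is supplied, the remainder of your argument is sound and runs parallel to the paper: $f^{-1}({\rm Exc}(\nu))$ is the support of the Cartier divisor $f^*{\rm Exc}(\nu)$, hence of pure codimension one, forcing $\overline f^{-1}(\Sing\,\overline{X}_k^n)=\Sing\,\overline{X}_k^n$ (this is Lemmas \ref{lem23b}--\ref{lem24}); the lift $f_A$ to $A_k^n$ exists by the Albanese-closure property and is \'etale by separability and $K\sim_{\Q}0$; preservation of the finite set $N=\pi^{-1}(\Sing\,\overline{X}_k^n)$ together with \'etaleness pins down $\deg f_A=1$ (Lemma \ref{lem25}); and the group-theoretic bookkeeping via the normalizer of $\langle\sigma_k^n\rangle$ inside $A_k^n\rtimes\Aut_{\rm group}(A_k^n)$ gives the displayed descriptions. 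But as written the proposal is missing the one genuinely nontrivial geometric input, which is also the only place the hypothesis $n\ge 3$ enters in an essential way.
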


\begin{remark}\label{rem3} Consider the case $n=2$. 
\begin{enumerate}
\item As in Theorem \ref{mainthm1}, ${\rm End}\, (X_2^2) = \Aut (X_2^2)$, as $X_2^2 = {\rm Km}\, (E_2 \times E_2)$ is a Kummer K3 surface (cf. Remark \ref{rem22} (1)). I wonder if ${\rm End}\, (X_k^2) = \Aut (X_k^2)$ or not when $k=3$, $4$, $6$.
\item There are many elements of $\Aut (X_k^2)$ which do not come from $\Aut (A_k^2)$. This is a big difference between the case $n \ge 3$ in Theorem \ref{mainthm1} and the case $n=2$.
\end{enumerate} 
\end{remark} 

We shall give a unified proof of Theorem \ref{mainthm1} for all $n \ge 3$, $k \in \{2,3,4, 6\}$ and all $K$ in Section \ref{sect2}, while the most interesting cases for our application are the cases where $(k, n) = (3, 3)$ and $\kappa (X_k^n) = -\infty$ over $K = \overline{\Q}$ and $\C$. 

\medskip

We then give two applications of Theorem \ref{mainthm1} and its proof.

\subsection{Kawaguchi-Silverman Conjecture (KSC) for a variety of Ueno type}\label{subsect12} 
\hfill 

Let $V$ be a normal projective variety defined over $\overline{\Q}$ and $f \in {\rm End}\, (V)$. Unless stated otherwise, $x \in V$ means that $x$ is a closed point of $V$ or equivalently, $x \in V(\overline{\Q})$. Kawaguchi-Silverman Conjecture (KSC) for $f$ asks a relation between the dynamical degree $d_1(f)$, which is one of the most basic complex-dynamical or algebro-dynamical invariants that measures algebro-geometric complexity of $f$, and the arithmetic degree function $a_f(x)$ ($x \in V$), which is an arithmetic dynamical invariant of $f$ that measures arithmetic complexity of $f$ defined over $\overline{\Q}$. See \cite[Conjecture 6]{KS16b} for the original statement and excellent surveys \cite{Ma23}, \cite{MZ23} and references therein for current status on KSC (See also Subsection \ref{subsect31} for a brief account relevant to us). It is known that $$1 \le a_f(x) \le d_1(f)$$ 
for all $x \in V$ when $f \in {\rm End}\, (V)$ (\cite[Theorem 3]{KS16a} and \cite{Ma20}). For $x \in V$, we denote by 
$$O_f(x) := \{f^n(x)\,|\, n \in \Z_{\ge 0}\}$$ 
the forward orbit of $x \in V$ under $f$. KSC for $f \in {\rm End}\, (V)$ is the following \cite[Conjecture 6]{KS16b}:

\begin{conjecture}\label{conj1} Let $V$ be a normal projective variety defined over $\overline{\Q}$ and $f \in {\rm End}\, (V)$. Let $x \in V$. Then, if $O_f(x)$ is Zariski dense in $V$, then $a_f(x) = d_1(f)$. 
\end{conjecture}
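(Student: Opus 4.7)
The statement is the Kawaguchi--Silverman Conjecture in full generality, which remains open; my proposal is accordingly a strategy of attack rather than a complete proof. The upper bound $a_f(x) \le d_1(f)$ for every $x \in V$ is already known (\cite{KS16a}, \cite{Ma20}), so the task is to prove the reverse inequality $a_f(x) \ge d_1(f)$ whenever $O_f(x)$ is Zariski dense. The standard entry point is Call--Silverman--Kawaguchi canonical height theory: one aims to produce a nef class $\theta \in N^1(V)_\R$ satisfying $f^{*}\theta = d_1(f)\,\theta$ (a Perron--Frobenius-type eigenvector, whose existence on the $f^{*}$-invariant nef cone follows from Birkhoff's theorem applied to that cone), to construct a canonical height $\hat{h}_{f,\theta}$ obeying $\hat{h}_{f,\theta}\circ f = d_1(f)\,\hat{h}_{f,\theta}$ and $\hat{h}_{f,\theta} = h_{\theta} + O(1)$, and then to extract $a_f(x) = d_1(f)$ directly from the functional equation as soon as one knows $\hat{h}_{f,\theta}(x) > 0$.

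Accepting this blueprint, the argument would close in two further steps. First, I would have to justify that the telescoped limit $\lim_{n} h_{\theta}(f^{n}(x))/d_1(f)^{n}$ defines a genuine height; when $\theta$ is merely nef (rather than ample or big) this is nontrivial and typically requires a companion upper bound $h_{\theta}(f^{n}(x)) = O(d_1(f)^{n})$ together with a sum-of-geometric-series style comparison. Second, I would have to show that the vanishing locus
\[
Z := \{\, y \in V \colon \hat{h}_{f,\theta}(y) = 0 \,\}
\]
is not Zariski dense, so that the hypothesis on $O_f(x)$ forces $\hat{h}_{f,\theta}(x) > 0$. Equivalently one needs a rigidity statement of Dynamical Mordell--Lang flavor asserting that a Zariski dense orbit of $\hat{h}_{f,\theta}$-small points would force the ambient dynamics to degenerate, contradicting $d_1(f) > 1$.

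The main obstacle is precisely this second step: even for smooth projective varieties where Perron--Frobenius eigenclasses are easy to produce, relating vanishing of a canonical height to geometric pre-periodicity is the content of deep unsettled conjectures (Dynamical Manin--Mumford, Zhang's small-points conjecture). A realistic plan, and the one the present paper takes, is therefore to reduce the general conjecture to settings in which the endomorphism carries extra structure. When ${\rm End}\,(V) = \Aut(V)$ as in Theorem \ref{mainthm1}, one can pull $f$ back along a finite quasi-\'etale cover by an abelian variety on which $f$ lifts to a group automorphism, and Silverman's classical theory of canonical heights on abelian varieties then disposes of KSC. More generally I would expect a proof of Conjecture \ref{conj1} in its full scope to proceed by induction along equivariant fibrations supplied by MMP and Iitaka theory, reducing case by case to abelian, Fano, Calabi--Yau, and general-type building blocks where canonical heights are available.
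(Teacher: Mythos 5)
You have correctly identified that the statement is a conjecture, not a theorem: the paper offers no proof of Conjecture \ref{conj1} in general, and no complete proof is currently known, so a ``blind proof attempt'' can only be a strategy, which is what you have honestly supplied. Your account of the canonical-height blueprint and of its genuine obstruction (controlling the vanishing locus of $\hat{h}_{f,\theta}$ when the Perron--Frobenius eigenclass $\theta$ is nef but not big) is an accurate summary of the state of the art. Your description of how the paper handles its special cases also matches what is actually done: for varieties of Ueno type the paper uses Lemma \ref{lem25} to lift any $f\in\Aut(X)$ through $X\to\overline{X}=A/\langle\sigma\rangle\leftarrow A$ to an automorphism of the abelian variety $A$, invokes KSC for abelian varieties (Theorem \ref{thm32}~(1)), and descends along the finite cover and the blow-up via Proposition \ref{prop31}; no canonical-height machinery beyond the abelian case is needed. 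The one caveat worth flagging is that your phrase ``finite quasi-\'etale cover by an abelian variety on which $f$ lifts'' compresses a nontrivial step: the lift exists because $A$ is the Albanese closure of $\overline{X}$ in the sense of \cite{CMZ20}, and establishing that $f$ descends to $\overline{X}$ at all requires the geometric analysis of the exceptional divisors in Lemmas \ref{lem22}--\ref{lem24}, which is where the actual work of the paper lies.
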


Conjecture \ref{conj1} obviously holds if $d_1(f) = 1$ or if there is no $x \in V$ with Zariski dense $O_f(x)$. As in \cite{CLO22}, we say that KSC holds {\it non-trivially and non-vacuously} for ${\rm End}\, (V)$ if KSC holds for all elements of ${\rm End}\, (V)$ and there are $f \in {\rm End}\, (V)$ and $x \in V$ such that $d_1(f) > 1$ and $O_f(x)$ is Zariski dense in $V$.

As the first application of Theorem \ref{mainthm1} and its proof, we show the following:

\begin{theorem}\label{mainthm2} Let $X_k^n$ be a variety of Ueno type over $\overline{\Q}$ with $n \ge 3$. Then KSC holds for ${\rm End}\, (X_k^n) = {\rm Aut}\, (X_k^n)$ non-trivially and non-vacuously.
\end{theorem}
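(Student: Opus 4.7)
The plan is to reduce KSC for $X_k^n$ to the known case of automorphisms of abelian varieties, exploiting the construction of $X_k^n$ as an equivariant blow-up of $A_k^n/\langle \sigma_k^n\rangle$. By Theorem~\ref{mainthm1} every $f \in {\rm End}(X_k^n)$ is an automorphism and is induced by some $\tilde f \in \Aut(A_k^n)$ of the form $t_a \circ \varphi$, with $a$ a $\sigma_k^n$-fixed point of $A_k^n$ and $\varphi$ a group automorphism in the appropriate matrix group. We therefore have a natural diagram relating $f$ on $X_k^n$, its descent $\bar f$ on $\overline{X}_k^n$ (via the birational morphism $\nu_k^n$), and $\tilde f$ on $A_k^n$ (via the finite quotient $\pi_k^n$).

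The first step is to check the compatibility of dynamical degrees, arithmetic degrees, and Zariski density of orbits along this diagram. Birational invariance gives $d_1(f) = d_1(\bar f)$, and the invariance of the first dynamical degree under generically finite separable surjections gives $d_1(\bar f) = d_1(\tilde f)$. On the arithmetic side, by the functoriality of $a_f$ under birational morphisms and under finite surjections (as established by Kawaguchi-Silverman and further developed by Matsuzawa), for any closed point $x \in X_k^n$ and any lift $\tilde x \in A_k^n$ of $\nu_k^n(x)$ one has $a_f(x) = a_{\bar f}(\nu_k^n(x)) = a_{\tilde f}(\tilde x)$. Likewise $O_f(x)$ is Zariski dense in $X_k^n$ if and only if $O_{\tilde f}(\tilde x)$ is Zariski dense in $A_k^n$ for some (equivalently, every) such lift $\tilde x$, since $\nu_k^n$ is birational and $\pi_k^n$ is finite surjective.

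Granted this compatibility, KSC for $f$ on $X_k^n$ follows from KSC for $\tilde f$ on $A_k^n$, which is classical work of Silverman and Kawaguchi-Silverman: if $O_{\tilde f}(\tilde x)$ is Zariski dense in $A_k^n$, then $a_{\tilde f}(\tilde x) = d_1(\tilde f)$, the common value being the square of the spectral radius of $\varphi^*$ on $H^1(A_k^n, \Q_\ell)$. For non-triviality and non-vacuity I need $f \in \Aut(X_k^n)$ with $d_1(f) > 1$ together with a point having Zariski-dense orbit. For $n \ge 3$, the matrix group ${\rm GL}(n, \Z[\zeta_k])$ (or its ${\rm PGL}$ variant) contains elements whose action on $E_k^{\times n}$ has spectral radius $>1$ — for instance any block-diagonal automorphism containing a hyperbolic element of ${\rm SL}(2, \Z)$. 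Any such $\tilde f$ descends to an $f \in \Aut(X_k^n)$ of positive entropy, and a sufficiently generic $\overline{\Q}$-point $\tilde x \in A_k^n$ yields a Zariski-dense $\tilde f$-orbit (existence of such points follows from the Northcott-type property of loxodromic automorphisms of abelian varieties: proper $\tilde f$-invariant subvarieties form a countable collection, and points of sufficiently large canonical height avoid all of them); the image of $\tilde x$ in $X_k^n$ then furnishes the required $x$.

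The main technical obstacle is the clean verification of the arithmetic-degree and Zariski-density functoriality along the equivariant finite cover $\pi_k^n$: the blow-up step $\nu_k^n$ is essentially cosmetic thanks to birational invariance of $a_f$, but descent through the Galois quotient $\pi_k^n$ requires that the cover structure interact correctly with Weil heights and with arithmetic degree. All the necessary ingredients are available in the recent KSC literature; assembling them for our specific diagram, combined with Silverman's theorem for abelian varieties and the explicit description of $\Aut(X_k^n)$ in Theorem~\ref{mainthm1}, completes the proof.
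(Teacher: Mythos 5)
Your reduction of KSC on $X_k^n$ to KSC on $A_k^n$ is exactly the paper's strategy: descend $f$ through the crepant resolution $\nu_k^n$ to $\overline{f}$ on $\overline{X}_k^n = A_k^n/\langle\sigma_k^n\rangle$, lift $\overline{f}$ to $f_A$ on $A_k^n$ via the Albanese closure, and transfer KSC back down using functoriality of dynamical and arithmetic degrees under birational and finite surjective morphisms (the paper packages this as Proposition~\ref{prop31}). That part is sound.

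The gap is in your non-triviality/non-vacuity construction. You propose taking a block-diagonal $\tilde f \in \Aut(A_k^n)$ containing a hyperbolic $2\times 2$ block in ${\rm SL}(2,\Z)$ and then choosing a ``sufficiently generic'' $\overline{\Q}$-point. This does give $d_1(\tilde f) > 1$, but it does not produce a Zariski-dense orbit: if $\tilde f$ is block-diagonal and acts trivially (or even just with finite order) on the complementary factors of $E_k^{\times n}$, then the image of any orbit under the projection to those factors is a finite set, so the orbit closure sits inside a proper closed subvariety. The subsequent appeal to ``the Northcott-type property of loxodromic automorphisms'' does not rescue this: a block-diagonal automorphism with an identity block is not loxodromic on $A_k^n$, its collection of invariant subvarieties is not just countably many points and the whole space, and over the countable field $\overline{\Q}$ ``sufficiently generic'' is not by itself a valid existence argument. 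The paper instead takes $f_n$ to come from a matrix $M_n \in {\rm SL}(n,\Z)$ whose characteristic polynomial is the \emph{irreducible} degree-$n$ minimal polynomial of a Pisot unit generating $\Q(\sqrt[n]{2})$; irreducibility rules out proper invariant abelian subvarieties and, via \cite[Proposition 6.2]{CLO22} (which in turn rests on Amerik's theorem for infinite orbits over $\overline{\Q}$), produces a $\overline{\Q}$-point with Zariski-dense $f_n$-orbit together with $d_1(f_n) = a_n^2 > 1$. You should replace the block-diagonal example with an automorphism whose characteristic polynomial on $H^1$ is irreducible (or otherwise has no nontrivial invariant abelian subvariety), and cite a concrete density result over $\overline{\Q}$ rather than a genericity heuristic.
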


We give a uniform proof of Theorem \ref{mainthm2} for $n \ge 3$ in Subsection \ref{subsect32} by reducing to KSC for an abelian variety due to Kawaguchi-Silverman and Silverman (\cite[Theorem 4]{KS16a}, \cite[Theorem 1.2]{Si17}), via Theorem \ref{mainthm1} and its proof. It is also worth noticing that Meng and Zhang show that KSC holds for endomorphisms of a {\it non-resolved} $\Q$-abelian variety which is by definition a normal projective variety which has an \'etale in codimension one covering by an abelian variety \cite[Theorem 2.8]{MZ22}.

The most interesting case in Theorem \ref{mainthm2} is the case where $X_k^n$ is rationally connected. KSC for ${\rm End}\, (V)$ is known to be true for any smooth projective rationally connected variety $V$ with inter-amplified endomorphism $f_0$ by \cite[Theorem 1.1]{MY22} (see also \cite{MZ22} and \cite{Yo21}). As necessarily $f_0 \not\in \Aut (V)$, our $X_k^n$ has no inter-amplified endomorphism by Theorem \ref{mainthm1}. 

\subsection{KSC for a Calabi-Yau threefold with $c_2$-contraction}\label{subsect13}  
\hfill 

By a complex Calabi-Yau threefold, we mean a smooth projective threefold defined over $\C$ with $\sO_X(K_X) \simeq \sO_X$ and the underlying analytic space $X^{{\rm an}}$ is simply-connected, i.e., $\pi_1(X^{\rm an}) = 1$. We call $X$ defined over $\overline{\Q}$ a Calabi-Yau threefold, if $X \times_{{\rm Spec}\, \overline{\Q}} {\rm Spec}\, \C$ is a complex Calabi-Yau threefold. 

Note that ${\rm End}\, (X) = \Aut (X)$ for a Calabi-Yau threefold $X$ (cf. Remark \ref{rem22} (1)). Let $K = \overline{\Q}$. KSC for ${\rm End}\, (X) = \Aut (X)$ of a Calabi-Yau threefold $X$ is only interested in the case where $\Aut (X)$ is an infinite group, as otherwise $d_1(f) =1$ for all $g \in \Aut\, (X)$. In such a case, we have a non-zero nef real class $x$ with $(c_2(X).x)_X = 0$ (\cite{Wi97}, see also \cite{Mi87} for pseudo-effectivity of $c_2(X)$). 

We call a surjective morphism $f : X \to W$ from a Calabi-Yau threefold to a normal projective variety $W$ with connected fibers a (non-trivial) $c_2$-contraction if $(c_2(X).f^*H)_X = 0$ for some ample Cartier divisor $H$ on $W$ and $\dim W >0$, possibly $f$ is birational. (\cite{OS01}, see also \cite{Og24} for updated description). So, KSC for $\Aut (X)$ of a Calabi-Yau threefolds $X$ over $\overline{\Q}$ with $c_2$-contraction is of interest. 

Actually \cite[Theorem 1.8]{LS21} considered KSC for Calabi-Yau threefolds $X$ with $c_2$-contraction and asserted the first statement of the following:

\begin{theorem}\label{mainthm3} Let $X$ be a Calabi-Yau threefold defined over $\overline{\Q}$ with a $c_2$-contraction. Then KSC holds for ${\rm Aut}\, (X)$. Moreover, KSC holds non-trivially and non-vacuously if $X$ has a birational $c_2$-contraction. 
\end{theorem}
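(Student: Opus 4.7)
The plan is to split the theorem into its two assertions and treat them separately. The first assertion --- KSC for $\Aut(X)$ whenever $X$ is a Calabi-Yau threefold with a $c_2$-contraction --- is exactly \cite[Theorem 1.8]{LS21}, which I would invoke directly; nothing new is required for this part. The novel content is the ``moreover'' clause, and the plan for it is via classification.

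For the moreover part, the key structural input is Remark \ref{rem2}(1), which recalls the Oguiso--Sakai classification \cite{OS01}: up to isomorphism, there are exactly two Calabi-Yau threefolds admitting a birational $c_2$-contraction, namely $X_3 = X_3^3$ and $X_7$ (the latter defined in Subsection \ref{subsect41}). So the problem reduces to verifying the non-trivial and non-vacuous form of KSC for these two explicit threefolds.

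For $X = X_3^3$ this is immediate: $X_3^3$ is a variety of Ueno type with $n=3$, $k=3$, so Theorem \ref{mainthm2} applies and yields both (i) an $f \in \Aut(X_3^3)$ with $d_1(f) > 1$ and (ii) a point $x \in X_3^3$ whose $f$-orbit is Zariski dense and satisfies $a_f(x) = d_1(f)$. Such $f$ arises from the ${\rm GL}(3, \Z[\zeta_3])$-factor in the description of $\Aut(X_3^3)$ in Theorem \ref{mainthm1}, while the Zariski-dense orbit is produced by lifting to the abelian cover $A_3^3 = E_3^{\times 3}$ via $\pi_3^3 \circ (\nu_3^3)^{-1}$ and using KSC for abelian varieties \cite{KS16a, Si17} at a very general point upstairs.

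For $X = X_7$, the same conclusion is packaged independently as Theorem \ref{thm42} in Subsection \ref{subsect41}, and the two cases together complete the proof. The main obstacle is precisely the case of $X_7$: since $X_7$ is not a variety of Ueno type in the sense of Definition \ref{def1}, Theorem \ref{mainthm2} does not apply verbatim, and one must redo the reduction by hand, using the specific abelian-threefold cover of $X_7$ together with an explicit description of $\Aut(X_7)$. Once this is in place, the same KSC input for abelian varieties closes the argument in parallel to the $X_3^3$ case.
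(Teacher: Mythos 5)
Your plan has a genuine and central gap in the treatment of the first assertion. You propose to invoke \cite[Theorem 1.8]{LS21} directly and declare that ``nothing new is required for this part,'' but the paper explicitly notes in the introduction to Subsection~\ref{subsect13} that the proof in \cite{LS21} rests on the \emph{false} claim that $\Aut(X)$ is finite whenever $X$ has a birational $c_2$-contraction --- a claim flatly contradicted by the ``moreover'' clause you are trying to prove, and indeed by Theorem~\ref{thm42}. The whole motivation of Section~\ref{sect4} is to repair this. So for the case where the maximal $c_2$-contraction $f:X\to W$ has $\dim W = 3$, i.e.\ $X\simeq X_3$ or $X_7$, the first assertion of Theorem~\ref{mainthm3} cannot be cited from \cite{LS21}; it must be reproved. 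The paper does this by splitting the argument by $\dim W$: for $\dim W =1$ it shows every automorphism acts with finite order on $W=\P^1$ via \cite{VZ01} (KSC vacuous), for $\dim W=2$ it uses the surface case and Proposition~\ref{prop31}(1), and for $\dim W=3$ it lifts automorphisms through the Albanese-type cover $A_i \to \overline{X}_i \leftarrow X_i$ and reduces to KSC on abelian threefolds, exactly as in the proof of Theorem~\ref{mainthm2}.

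A secondary imprecision: you say ``the same conclusion is packaged independently as Theorem~\ref{thm42}'' for $X_7$, but Theorem~\ref{thm42} only supplies the non-triviality/non-vacuity data, namely a $g\in\Aut(X_7)$ with $d_1(g)>1$ and a Zariski-dense orbit; it does not itself assert that $a_g(x)=d_1(g)$, so it is not a statement of KSC for $X_7$. That part still comes from the covering reduction to $A_7$. You do in fact acknowledge the need for this reduction in your final paragraph, so your overall plan is salvageable: properly reframed (split by $\dim W$, carry out the $\dim W=3$ case yourself using Theorem~\ref{thm41}/Example~\ref{ex41} and the lifting to $A_i$, then attach Theorem~\ref{thm42} for the ``moreover'' clause), it would match the paper. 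But as written, the opening claim that the first assertion is ``exactly \cite{LS21}'' is the wrong starting point and misses the correction that the section exists to make.
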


Unfortunately, proof of \cite[Theorem 1.8]{LS21} is based on a {\it claim that $\Aut (X)$ is finite when $X$ has a birational $c_2$-contraction}, which clearly contradicts to the second statement of Theorem \ref{mainthm3}. Actually, this paper is motivated to find a better understanding of \cite[Theorem 1.8]{LS21}. The second statement is based on the complete description of automorphism groups of Calabi-Yau threefolds with birational $c_2$-contraction (Theorem \ref{thm42}), which may have its own interest. We prove Theorem \ref{thm42} in Subsection \ref{subsect41} and Theorem \ref{mainthm3} in Subsection \ref{subsect42}.

{\bf Acknowledgements.} I would like to express my thanks to Professors Yohsuke Matsuzawa, Hei Fu, Sheng Meng and Sichen Li for their interest in this work and valuable comments. I would like to express my thanks to the referee for his/her careful reading and many corrections.

\section{Proof of Theorem \ref{mainthm1}}\label{sect2}

The goal of this section is to prove Theorem \ref{mainthm1}.

For a normal projective variety $V$, we denote by $N^1(V)$ the free $\Z$-module consisting of the numerical equivalence classes of Cartier divisors on $V$. We denote $N^1(V)_{\Q} := N^1(V) \otimes_{\Z} \Q$, $N^1(V)_{\R} := N^1(V) \otimes_{\Z} \R$ and so on. Unless stated otherwise, curves and subvarieties of $V$ are proper ones.    

The following general lemma should be well-known. Our proof here is essentially the same as in \cite[Lemma 2.3]{Fu02}:  

\begin{lemma}\label{lem21} Let $V$ be a normal projective variety and $f \in {\rm End}\, (V)$, i.e., $f : V \to V$ is a separable surjective self morphism of $V$. Then 
\begin{enumerate}
\item The natural homorphism $f^* : N^1(V)_{\Q} \to N^1(V)_{\Q}$ is 
an isomorphism.
\item $f$ is a finite morphism.
\item If in addition that the canonical divisor $K_V$ is $\Q$-linearly equivalent to $0$, then $f$ is \'etale in codimension one, that is, there is a closed algebraic set $S \subset V$ of codimension $\ge 2$ such that $f|_{V \setminus f^{-1}(S)} : V \setminus f^{-1}(S) \to V \setminus S$ is \'etale.
\end{enumerate}
\end{lemma}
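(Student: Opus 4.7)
The plan is to establish the three items in order, using each in the proof of the next.

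For (1), I would first observe that $N^1(V)_\Q$ is a finite-dimensional $\Q$-vector space because $V$ is projective, so it suffices to show that $f^*$ is injective. As $f$ is a surjective self-map between varieties of the same dimension, it is generically finite. Given a class $D$ with $f^*D \equiv 0$, I would take an arbitrary curve $C \subset V$ and pick an irreducible component $C'$ of $f^{-1}(C)$ mapping onto $C$; such a $C'$ exists by surjectivity of $f$, and has dimension $1$ since the general fibers of $f$ are finite, so $f|_{C'}\colon C' \to C$ is a finite morphism of some positive degree $e$. The projection formula then gives $0 = (f^*D)\cdot C' = D\cdot f_*C' = e\,(D \cdot C)$, whence $D\cdot C = 0$. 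Since $C$ was arbitrary, $D \equiv 0$, and injectivity of $f^*$ on a finite-dimensional $\Q$-vector space forces bijectivity.

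For (2), I would argue by contradiction. If $f$ were not finite, then (being proper since $V$ is projective) it would have to contract some curve $C$, giving $f_*C = 0$ in $N_1(V)_\Q$. The projection formula would then yield $(f^*D)\cdot C = 0$ for every $D \in N^1(V)_\Q$, but by the surjectivity of $f^*$ established in (1), every class in $N^1(V)_\Q$ is of the form $f^*D$; so every class would pair trivially with $C$. Pairing with the class of an ample divisor $H$ would give the contradiction $H\cdot C = 0$.

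For (3), once $f$ is finite by (2) and separable by hypothesis, the ramification formula for a finite separable morphism between normal varieties gives an equality of Weil divisor classes $K_V = f^*K_V + R$, where $R \geq 0$ is the (effective) ramification divisor. The assumption $K_V \sim_\Q 0$ forces $f^*K_V \sim_\Q 0$ and hence $R \sim_\Q 0$. A nonzero effective divisor on a projective variety cannot be $\Q$-linearly equivalent to $0$, since otherwise some integral multiple would be the divisor of a nonconstant global regular function; so $R = 0$. Thus $f$ is unramified at every codimension-one point, and since a finite morphism between normal varieties is automatically flat at codimension-one points (the local rings being DVRs), $f$ is étale outside a closed subset of codimension $\geq 2$.

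The main technical delicacy I expect is in (3), where the ramification formula must be invoked for a finite separable morphism of \emph{normal} (not necessarily smooth) varieties; one works at smooth codimension-one points, which cover all of $V$ in codimension one by normality, and interprets the formula as an equality of Weil divisor classes modulo $\Q$-linear equivalence. Parts (1) and (2), by contrast, are purely intersection-theoretic and should be routine.
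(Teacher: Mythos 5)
Parts (1) and (2) follow the paper's argument exactly: injectivity of $f^*$ via the projection formula applied to a curve $C'$ mapping finitely onto a given curve $C$, then finiteness of $f$ from surjectivity of $f^*$ and an ample class. One small wrinkle in your (1): an irreducible component of $f^{-1}(C)$ dominating $C$ need not be one-dimensional (it could, a priori, be a higher-dimensional subvariety whose image happens to be $C$). The safe way to produce $C'$ is to take the closure of a closed point of the fibre of $f$ over the generic point of $C$; by surjectivity this fibre is nonempty, its closed points have residue field finite over $\kappa(C)$, so the closure is automatically a curve with $f_*C' = eC$ for some $e \ge 1$. This is what the paper's phrasing ``there are a curve $C'$ and a positive integer $m$ such that $f_*(C') = mC$'' implicitly does.

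Part (3) takes a mildly different route. You quote the ramification formula $K_V \sim f^*K_V + R$ for a finite separable morphism of normal varieties and deduce $R=0$ from $K_V \sim_\Q 0$ together with the observation that a nonzero effective divisor on a normal projective variety cannot be $\Q$-linearly trivial; combined with flatness at codimension-one points (the local rings being DVRs), this gives \'etaleness in codimension one. The paper instead restricts to the smooth locus, produces a nowhere-vanishing pluricanonical section $\omega$, and uses separability to conclude that $(f^{00})^*\omega$ is a nonzero, hence nowhere-vanishing, section of a trivial line bundle, so the Jacobian of $f^{00}$ is a unit. The two arguments are equivalent in substance: the paper's pluricanonical computation is an in-line derivation of exactly the piece of the ramification formula you invoke. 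Yours is shorter if that formula for normal varieties is taken as known; the paper's is more self-contained.
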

\begin{proof} Since $V$ is projective, for each irreducible projective curve $C \subset V$, there are a curve $C'$ and a positive integer $m$ such that $f_*(C') = mC$ as one cycles. Thus $f^*$ is injective by the projection formula. Indeed, if $f^*D = 0$ in $N^1(V)_{\Q}$, then 
$$0 = (f^*D.C')_V = (D.f_{*}C')_{V} = m(D.C)_{V}\,\, {{\rm hence}}\,\, (D.C)_V = 0$$ 
for all irreducible curves $C \subset V$. Thus $D = 0$ in $N^1(V)_{\Q}$ by definition. Since $N^1(V)_{\Q}$ is a $\Q$-vector space of finite dimension, the linear map $f^*$ is then an isomorphism. This shows (1).

Since $V$ is projective, if $f$ is not finite, then there is an irreducible curve $C \subset V$ such that $f(C)$ is a point. Then $(f^*D.C)_V = 0$ for all $D \in N^1(V)_{\Q}$ by the projection formula. On the other hand, since $V$ is projective, there is a hyperplane class $H \in N^1(V)_{\Q}$ and we have $(H.C)_V > 0$, a contradiction to the surjectivity of $f^*$ in (1). This proves (2). 

Let us show (3). Let 
$$V^{0} := V \setminus (f({\rm Sing}\, V) \cup {\rm Sing}\, V)),$$ 
$$V^{00} := V \setminus f^{-1}(f({\rm Sing}\, V) \cup {\rm Sing}\, V)).$$ 
Since $V$ is normal and $f$ is finite and surjective, $V\setminus V^{0}$ and $V\setminus V^{00}$ are closed subsets of codimension $\ge 2$ in $V$ and the morphism $f^{00} : V^{00} \to V^{0}$ induced from $f$ is a finite surjective morphism. Here $V^{00}$ and $V^{0}$ are also smooth. As $K_V$ is $\Q$-linearly equivalent to $0$, it follows that there is a regular nowhere vanishing pluricanonical form $\omega$ on $V^{0}$. Then $(f^{00})^*\omega$ is a regular pluricanonical form and it is not identically $0$ on $V^{00}$ as $f$ is separable. Then $(f^{00})^*\omega$ is also nowhere vanishing on $V^{00}$. In particular, the Jacobian divisor of $f^{00}$ has to be zero. Since $V^{00}$ and $V^{0}$ are smooth, it follows that $f^{00}$ is \'etale. This proves (3). 
\end{proof}

\begin{remark}\label{rem22} 
\begin{enumerate}
\item If $X$ is a smooth projective variety whose $K_X$ is $\Q$-linearly equivalent to $0$, then any $f \in {\rm End}\, (X)$ is \'etale by the proof of (3), as observed in [Fu, Lemma 2.3]. 
\item We use the notation in Definition \ref{def1}. The multiplication morphism by $2$ on $A_2^n$ ($n \ge 2$) induces a finite surjective morphism $[2] : \overline{X}_2^{n} \to \overline{X}_2^{n}$. The morphism $[2]$ is not \'etale even though $2K_{\overline{X}_2^{n}}$ is linearly equivalent to $0$. One can also check directly that $[2]$ does not lifts to ${\rm End}\, (X_{2}^{n})$, which is also a consequence of Theorem \ref{mainthm1} when $n \ge 3$.   
\end{enumerate}
\end{remark}

From now, let $n \ge 3$ and $X := X_k^n$ be a variety of Ueno type. For ease of notation, we also set 
$$E := E_k\,\, ,\,\, \sigma := \sigma_k^n\,\, ,\,\, A := A_k^n\,\, ,\,\, \overline{X} := \overline{X}_k^n\,\, ,\,\, \pi := \pi_k^n\,\, , \,\, \nu := \nu_k^n.$$Let ${\sE}_i$ ($1 \le i \le m$) be the irreducible components of the exceptional set ${\rm Exc}\, (\nu)$ of $\nu$. Then 
$${\rm Exc}\, (\nu) = \cup_{i=1}^{m} {\sE}_i\,\, ,\,\, {\sE}_i \simeq \P^{n-1}$$ 
and $P_i := \nu({\sE}_i)$ is a point of $\overline{X}$ and 
$${\rm Sing}\, \overline{X} = \{P_i\}_{i=1}^{m}.$$

The following elementary observation is crucial in our proof of Theorem \ref{mainthm1}:

\begin{lemma}\label{lem22} Let $\varphi : \P^{n-1} \to X$ be a non-constant morphism. Then $\varphi(\P^{n-1}) = {\sE}_i$ for some $i$. 
\end{lemma}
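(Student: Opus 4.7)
My plan is to exploit the quotient presentation $\pi : A \to \overline{X}$ to reduce the question to the classical fact that $\P^{n-1}$ admits no non-constant morphism to an abelian variety. Set $g := \nu \circ \varphi : \P^{n-1} \to \overline{X}$. The strategy has two stages: first, show that $g$ is the constant map at some singular point $P_i$ of $\overline{X}$; second, conclude using the fact that any non-constant self-morphism of $\P^{n-1}$ is surjective.

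For the first stage, assume for contradiction that $g$ is non-constant. A preliminary observation is that no irreducible component of $g^{-1}(P_i)$ can be a divisor: if $D \subset g^{-1}(P_i)$ were a hypersurface and $L$ an ample Cartier divisor on $\overline{X}$, then $g^*L \sim \sO_{\P^{n-1}}(d)$ is nef with $d \ge 0$, while $(g^*L)|_D = \sO_D$ since $g|_D$ is constant; as $\sO(1)|_D$ is very ample, this forces $d = 0$, and then the projection formula combined with the ampleness of $L$ would force $g$ to contract every curve, making $g$ itself constant, a contradiction. Hence every fiber $g^{-1}(P_i)$ has codimension $\ge 2$, and $U := \P^{n-1} \setminus g^{-1}(\Sing\, \overline{X})$ has complement of codimension $\ge 2$ in $\P^{n-1}$.

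Now let $W$ be the normalization of the fibre product $\P^{n-1} \times_{\overline{X}} A$. Since $\pi$ is finite \'etale over $\overline{X} \setminus \Sing\, \overline{X}$ (the free locus of the $\langle \sigma \rangle$-action on $A$), the morphism $W \to \P^{n-1}$ is \'etale over $U$. Zariski--Nagata purity of the branch locus then implies that $W \to \P^{n-1}$ is \'etale on all of $\P^{n-1}$, and since the \'etale fundamental group of $\P^{n-1}$ is trivial in every characteristic, each connected component of $W$ is isomorphic to $\P^{n-1}$. Fixing any such component, the second projection yields a morphism $\tilde g : \P^{n-1} \to A$ with $\pi \circ \tilde g = g$. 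Since every morphism from $\P^{n-1}$ to an abelian variety is constant, $g$ itself is constant, contradicting the assumption.

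With $g$ constant, let $\bar q \in \overline{X}$ denote its value. If $\bar q \notin \Sing\, \overline{X}$ then $\nu^{-1}(\bar q)$ is a single point and $\varphi$ would be constant, which is forbidden; hence $\bar q = P_i$ for some $i$ and $\varphi$ factors as a morphism $\P^{n-1} \to \sE_i \simeq \P^{n-1}$. A non-constant morphism $\P^{n-1} \to \P^{n-1}$ pulls $\sO(1)$ back to some $\sO(d)$ with $d \ge 1$, which is ample, so $\varphi$ is finite and hence surjective onto $\sE_i$. I expect the main obstacle to be the bridge from the codimension-$\ge 2$ bound to \'etale triviality: the ample-pullback step is exactly what validates the Zariski--Nagata hypothesis, without which one would be left with an \'etale cover of a possibly non-simply-connected open subset of $\P^{n-1}$, where no cheap lifting to $A$ is available.
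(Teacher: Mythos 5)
Your proof is correct, and it takes a genuinely different route from the paper's. The paper sets $D := \varphi(\P^{n-1}) \subset X$ and argues in two cases: if $D$ is disjoint from every $\sE_i$, the map $\nu\circ\varphi$ lands in the \'etale locus of $\pi$ and the fiber product $\P^{n-1}\times_{\overline{X}^0}A^0$ is a (trivial) finite \'etale cover of $\P^{n-1}$, lifting to a non-constant map $\P^{n-1}\to A$, a contradiction; if instead $D\cap\sE_1\neq\emptyset$, the paper picks a curve $C\subset D\cap\sE_1$, passes to the partial blow-down $\nu_1:X\to\overline{X}_1$ contracting only $\sE_1$, and exploits the fact that $N_1(D)_\Q$ is one-dimensional (since $D$ is dominated by $\P^{n-1}$) to conclude that $\nu_1$ contracts every curve on $D$ and hence $D\subset\sE_1$. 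You instead work with $g:=\nu\circ\varphi$ on the source $\P^{n-1}$: you first use the $\Pic(\P^{n-1})\cong\Z$ trick with an ample pullback to force $g^{-1}(\Sing\,\overline{X})$ to have codimension $\ge 2$, then invoke Zariski--Nagata purity on the (normalization of a dominating component of the) fiber product $\P^{n-1}\times_{\overline{X}}A$ to promote the \'etale cover from $U$ to all of $\P^{n-1}$, again producing the impossible lift. This handles both of the paper's cases uniformly and sidesteps the blow-up construction of $\overline{X}_1$ entirely; the trade-off is that you need purity of the branch locus (valid here since $\P^{n-1}$ is regular and the cover is finite with normal source and generically separable because $\pi$ is tame, given $\mathrm{char}\,K\neq 2,3$), whereas the paper's second case is elementary intersection theory. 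Both arguments ultimately rest on the same two facts---$\pi_1^{\mathrm{\acute{e}t}}(\P^{n-1})=1$ and $\Hom(\P^{n-1},A)=\{\text{const}\}$---but your formulation makes the logical role of the codimension bound explicit, which the paper leaves implicit in its case split.
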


\begin{proof} Set $D := \varphi(\P^{n-1})$ with reduced structure. Since the Picard number of $\P^{n-1}$ is $1$, $D$ is projective and $\varphi$ is not constant, it follows that $\dim D = n-1$ and therefore $D$ is 
a Cartier divisor on $X$. 

Assume that $D \cap {\sE}_i = \emptyset$ for all $i$. Then $\nu(D) \subset \overline{X} \setminus {\rm Sing}\, (\overline{X})$. Since 
$$\pi^{0} : A^{0} := A \setminus \pi^{-1}({\rm Sing}\, (\overline{X})) \to \overline{X}^0 := \overline{X} \setminus {\rm Sing}\, (\overline{X})$$ 
induced from $\pi$ is \'etale, whereas $\P^{n-1}$ has no non-trivial finite \'etale cover, the fiber product $\P^{n-1} \times_{\overline{X}^0} A^{0}$, under $\pi^{0}$ and $\nu \circ \varphi$, is a disjoint union of $\P^{n-1}$. From each factor $\P^{n-1}$ of $\P^{n-1} \times_{\overline{X}^0} A^{0}$, we have a non-constant morphism $\P^{n-1} \to A^{0} \subset A$ via the natural morphism of the fiber product. However, this is impossible as any morphism from $\P^{n-1}$ to $A$ is constant by the universality of the Albanese morphism and the fact that the Albanese variety of $\P^{n-1}$ is a point by $H^1(\P^{n-1}, \sO_{\P^{n-1}}) = 0$.

Hence there is $i$, say $1$, such that $D \cap {\sE}_1 \not= \emptyset$. Then, $D|_{{\sE}_1}$ is either a non-zero effective Cartier divisor on ${\sE}_1$ or $D = {\sE}_1$. Thus
$$\dim\, D \cap {\sE}_1 \ge n-2 \ge 1$$ 
by $n \ge 3$. In particular, there is an irreducible curve $C \subset D \cap {\sE}_1$.  

Let $\overline{X}_1$ be the blow up of $\overline{X}$ at all $P_j$ with $j \ge 2$ except $P_1$. Then $\overline{X}_1$ is projective and we have a birational morphism 
$$\nu_1 : X \to \overline{X}_1$$ 
such that $X \setminus {\sE}_1 \simeq \overline{X}_1 \setminus \nu_1({\sE}_1)$ and $\nu_1({\sE}_1)$ is a point, say again $P_1$, by slight abuse of language. Let $H$ be a very ample line bundle on $\overline{X}_1$. Then, as $C \subset {\sE}_1$ and $\nu_1({\sE}_1)$ is a point, we have 
$$(C.\nu_1^*H)_X = ((\nu_1)_{*}C.H)_{\overline{X}_1} = 0.$$
Let $Q_0 \in C \subset D$ and $Q \in D$ and choose a connected curve $C_Q \subset D$ such that $Q_0, Q \in C_Q$. Since we have a surjective morphism $\P^{n-1} \to D$, it follows that any two curves on $D$ are proportional. Then $C_Q = aC$ in ${\rm N}_{1}\, (D)_{\Q} = {\rm N}^{1}(D)_{\Q}^{*}$ for some $a \in \Q$. As then
$$((\nu_1)_{*}(C_Q).H)_{\overline{X}_1} = (C_{Q}.\nu_1^*H)_X = (C_Q.\nu_1^*H|_D)_{D} = a(C.\nu_1^*H|_D)_D = a(C.\nu_1^*H)_X = 0,$$ 
it follows that $\nu_1(C_Q)$ is a point. Hence 
$$\nu_1(Q) = \nu_1(C_Q) = \nu_1(Q_0) = \nu_1(C) = \nu_1({\sE}_1) = P_1,$$
and thefore $D \subset {\sE}_1$. Hence $D= \sE_1$ as both are irreducible of the same dimension. This completes the proof.
\end{proof}

\begin{remark}\label{rem22a} Not only the proof but also the statement of Lemma \ref{lem22} is not valid when $n=2$. Indeed, the image of the diagonal $\Delta \subset A_k^2$ under the natural map $A_k^2 \dasharrow  X_k^2$ is $\P^1$ which does not coincide any $\sE_i = \P^1$. In fact, $X_k^2$ has infinitely many rational curves, as $X_k^2$ is either a rational surface or a Kummer K3 surface of product type.  
\end{remark}

Let $f \in {\rm End}\, (X)$. Our first goal is to show that $f \in \Aut (X)$.

\begin{lemma}\label{lem23} $f({\rm Exc}\, (\nu)) \subset {\rm Exc}\, (\nu)$.
\end{lemma}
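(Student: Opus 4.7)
The plan is to apply Lemma \ref{lem22} to each component of the exceptional set. Recall that $\mathrm{Exc}(\nu) = \bigcup_{i=1}^m \sE_i$ with $\sE_i \simeq \P^{n-1}$, so for each $i$ the restriction $f|_{\sE_i}$ is a morphism $\P^{n-1} \to X$. If I can show this restriction is non-constant, then Lemma \ref{lem22} forces $f(\sE_i) = \sE_{j(i)}$ for some index $j(i)$, and taking the union over $i$ gives $f(\mathrm{Exc}(\nu)) \subset \mathrm{Exc}(\nu)$.

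So the only thing to verify is that $f|_{\sE_i}$ is not constant. Here I would invoke Lemma \ref{lem21}(2): since $f \in \mathrm{End}(X)$, the morphism $f$ is finite, hence every fiber of $f$ is a finite set. If $f|_{\sE_i}$ were constant with value $q$, then $f^{-1}(q) \supset \sE_i$, which has dimension $n-1 \geq 2 > 0$, contradicting finiteness. Therefore $f|_{\sE_i}$ is non-constant for every $i$.

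Combining the two paragraphs, Lemma \ref{lem22} applies to each $f|_{\sE_i}$ and yields $f(\sE_i) = \sE_{j(i)} \subset \mathrm{Exc}(\nu)$, whence
\[ f(\mathrm{Exc}(\nu)) = \bigcup_{i=1}^{m} f(\sE_i) \subset \mathrm{Exc}(\nu), \]
as desired. There is no serious obstacle: the only things used are the already-established finiteness of $f$ and the rigidity statement of Lemma \ref{lem22}, both of which crucially rely on $n \geq 3$ (the finiteness is general, but Lemma \ref{lem22} fails for $n=2$ by Remark \ref{rem22}, so it is essential that we are in the higher-dimensional regime where the exceptional divisors are the only images of $\P^{n-1}$ inside $X$).
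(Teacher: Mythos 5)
Your proof is correct and matches the paper's argument: the paper likewise notes that $f(\sE_i)$ cannot be a point because $f$ is finite (Lemma \ref{lem21}(2)), and then applies Lemma \ref{lem22} to $f|_{\sE_i} : \P^{n-1} \to X$ to conclude $f(\sE_i) = \sE_{j(i)}$. You have simply spelled out the finiteness argument and the union over $i$ in more detail.
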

\begin{proof} $f(\sE_i)$ is not a point by Lemma \ref{lem21} (2). Since ${\sE}_i \simeq \P^{n-1}$, we may apply Lemma \ref{lem22} for $f|_{{\sE}_i} : {\sE}_i \to X$ to get the result.
\end{proof}

Let $\overline{f} := \nu \circ f \circ \nu^{-1}$. Then $\overline{f}$ is a dominant rational map from $\overline{X}$ to $\overline{X}$.

\begin{lemma}\label{lem23a} 
$\overline{f} \in {\rm End}\, (\overline{X})$. 
\end{lemma}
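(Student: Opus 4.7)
The plan is to promote the a priori rational map $\overline{f}$ to a morphism by exhibiting the factorization $\nu\circ f=\overline{f}\circ\nu$, and then to verify surjectivity and separability.

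First I would sharpen Lemma~\ref{lem23}. Since $f$ is finite by Lemma~\ref{lem21}(2), the restriction $f|_{\sE_i}:\sE_i\to X$ is non-constant; as $\sE_i\simeq\P^{n-1}$, Lemma~\ref{lem22} then forces $f(\sE_i)=\sE_{j(i)}$ for a well-defined index $j(i)\in\{1,\dots,m\}$. Thus $f$ sends each component of ${\rm Exc}\,(\nu)$ onto another component.

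Next I would produce the factorization. Consider the morphism $g:=\nu\circ f:X\to\overline{X}$. By the previous step $g(\sE_i)=\nu(\sE_{j(i)})=P_{j(i)}$, so $g$ is constant on every non-trivial fiber of $\nu$, while on $X\setminus{\rm Exc}\,(\nu)$ the map $\nu$ is an isomorphism onto $\overline{X}\setminus{\rm Sing}\,(\overline{X})$, where $\overline{f}$ is already a morphism. To extend $\overline{f}$ across the finitely many singular points $P_i$, I would take the graph closure $\Gamma\subset\overline{X}\times\overline{X}$ of $\overline{f}$. The first projection $p_1:\Gamma\to\overline{X}$ is proper and birational, and by the fiberwise contraction property its fiber over $P_i$ is the single point $(P_i,P_{j(i)})$. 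Since $\overline{X}$ is normal, Zariski's main theorem implies that $p_1$ is an isomorphism, so $\Gamma$ is the graph of a genuine morphism $\overline{f}:\overline{X}\to\overline{X}$ satisfying $\overline{f}\circ\nu=\nu\circ f$.

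Finally, surjectivity of $\overline{f}$ is immediate from $\overline{f}\circ\nu=\nu\circ f$ and the surjectivity of $\nu$ and $f$. For separability, $\nu^*:K(\overline{X})\to K(X)$ is an isomorphism (as $\nu$ is birational), and the identity $\nu^*\circ\overline{f}^*=f^*\circ\nu^*$ exhibits $\overline{f}^*$ as the conjugate of the separable extension $f^*$ by $\nu^*$, hence itself separable. Together this gives $\overline{f}\in{\rm End}\,(\overline{X})$.

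The main obstacle is the extension step: the set-theoretic inclusion $f({\rm Exc}\,(\nu))\subset{\rm Exc}\,(\nu)$ from Lemma~\ref{lem23} alone does not suffice; what is genuinely needed is that $f$ sends each irreducible component $\sE_i$ into a single component $\sE_{j(i)}$, so that $\nu\circ f$ contracts the whole fiber $\nu^{-1}(P_i)$ to one point. This refinement, combined with normality of $\overline{X}$, is precisely what permits Zariski's main theorem to upgrade the rational map to a morphism.
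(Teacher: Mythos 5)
Your proof is correct and follows essentially the same route as the paper: you observe that $f$ maps each exceptional component $\sE_i$ onto a single component $\sE_{j(i)}$ (which is precisely what the proof of Lemma~\ref{lem23} establishes), so $\nu\circ f$ contracts each fiber of $\nu$ to a point, and then normality of $\overline{X}$ lets the rational map $\overline{f}$ extend to a morphism. The only differences are cosmetic: you spell out the extension step via graph closure and Zariski's main theorem where the paper compresses it into ``as $\overline{X}$ is normal,'' and you also verify surjectivity and separability explicitly, both of which the paper leaves implicit.
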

\begin{proof} By definition, $\overline{f}$ is a morphism on $\overline{X} \setminus {\rm Sing}\, \overline{X}$. By Lemma \ref{lem23}, for each $P_i \in {\rm Sing}\, \overline{X}$, there is $j$ such that $f(\nu^{-1}(P_i)) = \sE_j$. Then 
$$\overline{f}(P_i) = \nu\circ f \circ \nu^{-1}(P_i) = \nu(\sE_j) = P_j.$$
Hence the rational map $\overline{f}$ is actually a morphism, as $\overline{X}$ is normal.  \end{proof} 

\begin{lemma}\label{lem23b} 
$f^{-1}({\rm Exc}\, (\nu)) \subset {\rm Exc}\, (\nu)$.
\end{lemma}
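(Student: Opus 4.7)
The plan is to argue by contradiction via the singular quotient $\overline X$. Since $f \in {\rm End}\, (X)$ is finite by Lemma \ref{lem21}(2), and the pullback of a pure codimension-one closed subset under a finite morphism of normal varieties is again pure of codimension one, the set $f^{-1}({\rm Exc}\, (\nu))$ is pure of dimension $n-1$. It therefore suffices to show that every irreducible component $D$ of $f^{-1}({\rm Exc}\, (\nu))$ lies in ${\rm Exc}\, (\nu)$. Suppose, for contradiction, that some such component $D$ satisfies $D \not\subset {\rm Exc}\, (\nu)$. Finiteness of $f$ also forces $\dim f(D) = \dim D = n-1$, and since the irreducible components of ${\rm Exc}\, (\nu)$ are the $\sE_i \simeq \P^{n-1}$, each of dimension $n-1$, the image must be $f(D) = \sE_i$ for some $i$.

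Next I push the picture down to $\overline X$ along $\nu$. Because $D$ is irreducible and $D \setminus {\rm Exc}\, (\nu)$ is a nonempty open dense subset of $D$ on which $\nu$ restricts to an isomorphism, the image $\nu(D) \subset \overline X$ is an irreducible closed subvariety of dimension $n-1$. From the identity $\nu \circ f = \overline f \circ \nu$, immediate from the definition $\overline f = \nu \circ f \circ \nu^{-1}$, I compute
$$\overline f (\nu(D)) = \nu(f(D)) = \nu(\sE_i) = \{P_i\}.$$
Thus $\overline f$ collapses the $(n-1)$-dimensional closed subvariety $\nu(D)$ of $\overline X$ to the single point $P_i$.

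The contradiction is now immediate: by Lemma \ref{lem23a}, $\overline f \in {\rm End}\, (\overline X)$, and Lemma \ref{lem21}(2) applied to $\overline f$ then ensures that $\overline f$ is a finite morphism. In particular $\overline f$ has zero-dimensional fibers and cannot send a positive-dimensional closed subvariety of $\overline X$ to a single point. This contradiction shows that every component of $f^{-1}({\rm Exc}\, (\nu))$ lies in ${\rm Exc}\, (\nu)$, proving the lemma. The main conceptual step, such as it is, is realizing that the obstruction should be sought downstairs on $\overline X$ rather than on $X$; once this reduction is made, the preceding lemmas have been designed precisely so that finiteness of $\overline f$ supplies the contradiction, and no further input from the specific Ueno-type construction is needed.
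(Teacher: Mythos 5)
Your proof is correct and follows essentially the same route as the paper's: take a component $D$ of $f^{-1}({\rm Exc}\,(\nu))$ not contained in ${\rm Exc}\,(\nu)$, observe it has dimension $n-1$ (the paper notes $f^{*}{\rm Exc}\,(\nu)$ is a Cartier divisor, which in the smooth variety $X$ is the same fact you invoke via finiteness), push down to $\overline X$ to get an $(n-1)$-dimensional $\nu(D)$ that $\overline f$ sends into the finite set $\{P_i\}$, and contradict finiteness of $\overline f$ from Lemma \ref{lem21}(2). The only cosmetic difference is that you pin down $f(D)=\sE_i$ exactly, whereas the paper only needs $f(D)\subset{\rm Exc}\,(\nu)$; both suffice.
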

\begin{proof} Otherwise, there is an irreducible component, say $D$, of $f^{-1}({\rm Exc}\, (\nu))$ such that $D \not\subset {\rm Exc}\, (\nu)$. Here $D$ is of dimension $n-1$ as $f^{-1}({\rm Exc}\, (\nu))$ is the support of the Cartier divisor $f^{*}({\rm Exc}\, (\nu))$. Thus $\nu(D)$ is of dimension $n-1 \ge 2$ by $D \not\subset {\rm Exc}\, (\nu)$. However, then we would have  
$$\overline{f} (\nu (D)) = \nu \circ f(D) \subset \nu ({\rm Exc}\, (\nu)) = \cup_{i=1}^{m} P_i.$$
This contradicts to Lemma \ref{lem21} (2). Hence the results follows.
\end{proof}

Since we do {\it not} yet know $\overline{f} \in \Aut \overline{X}$, the following lemma is not trivial: 
\begin{lemma}\label{lem24} 
$\overline{f}({\rm Sing}\, (\overline{X})) = {\rm Sing}\, (\overline{X})$ and 
$\overline{f}^{-1}({\rm Sing}\, (\overline{X})) = {\rm Sing}\, (\overline{X})$.
\end{lemma}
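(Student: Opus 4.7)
The plan is to use the set-theoretic control of $f$ on $\mathrm{Exc}\,(\nu)$ provided by Lemmas \ref{lem23} and \ref{lem23b}, together with the identity $\overline{f}\circ \nu = \nu \circ f$ of morphisms from $X$ to $\overline{X}$, to transfer the relevant inclusions down to $\overline{X}$. Then I would upgrade the two resulting inclusions to equalities using the fact that $\overline{f}\in {\rm End}\,(\overline{X})$ (Lemma \ref{lem23a}) is automatically a finite surjective morphism by Lemma \ref{lem21}(2), so in particular $\overline{f}^{-1}(P_i) \ne \emptyset$ for each singular point $P_i$.

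More concretely, I would first prove the two inclusions $\overline{f}({\rm Sing}\,(\overline{X})) \subset {\rm Sing}\,(\overline{X})$ and $\overline{f}^{-1}({\rm Sing}\,(\overline{X})) \subset {\rm Sing}\,(\overline{X})$. For the first one, since $\nu^{-1}(P_i) = {\sE}_i$ and $f({\sE}_i) \subset {\rm Exc}\,(\nu)$ by Lemma \ref{lem23}, applying $\nu$ gives $\overline{f}(P_i) = \nu(f({\sE}_i)) \subset \nu({\rm Exc}\,(\nu)) = {\rm Sing}\,(\overline{X})$. For the second inclusion, I would argue by contradiction: suppose $Q \in \overline{X} \setminus {\rm Sing}\,(\overline{X})$ but $\overline{f}(Q) = P_i$ for some $i$. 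Let $Q' := \nu^{-1}(Q) \in X \setminus {\rm Exc}\,(\nu)$, a single point since $Q$ is smooth. The identity $\nu \circ f = \overline{f}\circ \nu$ gives $\nu(f(Q')) = P_i$, so $f(Q') \in \nu^{-1}(P_i) = {\sE}_i \subset {\rm Exc}\,(\nu)$. Then $Q' \in f^{-1}({\rm Exc}\,(\nu)) \subset {\rm Exc}\,(\nu)$ by Lemma \ref{lem23b}, contradicting the choice of $Q'$.

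To promote both inclusions to equalities I would use surjectivity. Since $\overline{f}$ is surjective, for every $P_i \in {\rm Sing}\,(\overline{X})$ the fibre $\overline{f}^{-1}(P_i)$ is nonempty, and by the second inclusion it is entirely contained in the finite set ${\rm Sing}\,(\overline{X})$. Hence there is some $P_j$ with $\overline{f}(P_j) = P_i$, which gives at once both ${\rm Sing}\,(\overline{X}) \subset \overline{f}({\rm Sing}\,(\overline{X}))$ and ${\rm Sing}\,(\overline{X}) \subset \overline{f}^{-1}({\rm Sing}\,(\overline{X}))$. Combined with the inclusions from the previous paragraph, the two desired equalities follow.

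All of the genuine difficulty has already been discharged in Lemmas \ref{lem23}, \ref{lem23a}, and \ref{lem23b}; the main point to be careful about is that at this stage $\overline{f}$ is only known to be a finite surjective endomorphism, not an automorphism, so $\overline{f}^{-1}$ must be read as a set-theoretic preimage, and surjectivity must be invoked explicitly to produce preimages of the singular points.
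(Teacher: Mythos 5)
Your proof is correct and follows essentially the same route as the paper: both inclusions come from Lemmas \ref{lem23} and \ref{lem23b}, and are upgraded to equalities via surjectivity of the endomorphism (Lemma \ref{lem21}(2)) together with finiteness of the singular locus. The only cosmetic difference is that you argue directly with the singular points of $\overline{X}$, whereas the paper first shows $f$ and its set-theoretic inverse permute the finite set $S=\{\sE_i\}$ of exceptional components in $X$ and then descends via $\nu$; the ingredients and the logic are the same.
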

\begin{proof} Let $S := \{\sE_i\}_{i=1}^{m}$ be the set of irreducible components of ${\rm Exc}\, (\nu)$. By Lemmas \ref{lem23} and \ref{lem23b}, the correspondences $f$ and $f^{-1}$ from $X$ to $X$ induce correspondences $f|_S$ and $f^{-1}|_S$ from $S$ to $S$. By Lemma \ref{lem23b}, $f|_S$ is a surjective morphism. Thus both $f|_S$ and $f^{-1}|_S$ are bijective morphisms, as $S$ is a finite set.  Since $\overline{f} = \nu \circ f \circ \nu^{-1}$, it follows that $\overline{f}|_{{\rm Sing}\, (\overline{X})}$ and $\overline{f}^{-1}|_{{\rm Sing}\, (\overline{X})}$ are bijective morphisms from ${\rm Sing}\, (\overline{X})$ to ${\rm Sing}\, (\overline{X})$, which are iniverse to each other. \end{proof}

\begin{lemma}\label{lem25} Let $f \in {\rm End}\, (X)$. Let $\overline{f} \in {\rm End}\, (\overline{X})$ be as in Lemma \ref{lem23a}. Then 
\begin{enumerate}
\item There is $f_A \in {\rm Aut}\, (A)$ such that $\pi \circ f_A = \overline{f} \circ \pi$.
\item $\overline{f} \in \Aut (\overline{X})$ and $f \in \Aut (X)$. Moreover, $\Aut (X) = \Aut (\overline{X})$ under the identification ${\rm Bir}\, (X) = {\rm Bir}\, (\overline{X})$ by $\nu$. 
\end{enumerate}
\end{lemma}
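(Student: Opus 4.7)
The plan is to lift $\overline{f}$ through $\pi$ to a finite separable self-map $f_A$ of the abelian variety $A$, upgrade $f_A$ to an automorphism by a counting argument based on Lemma \ref{lem24}, and then descend back to obtain (2).

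\textbf{Construction of $f_A$.} By Lemma \ref{lem24} the morphism $\overline{f}$ preserves $\overline{X}^{0} := \overline{X} \setminus \Sing (\overline{X})$, and by Lemma \ref{lem21}(3) applied to $\overline{X}$ (whose canonical class is $\Q$-linearly trivial since $\pi$ is \'etale in codimension one) together with purity of the branch locus on the smooth $\overline{X}^0$, the restriction $\overline{f}^{0} := \overline{f}|_{\overline{X}^0}$ is finite \'etale. Setting $A^{0} := \pi^{-1}(\overline{X}^{0}) = A \setminus A^{\langle \sigma \rangle}$ (of complement codimension $n \ge 3$), the \'etale Galois cover $\pi^{0} : A^{0} \to \overline{X}^{0}$ with group $G = \langle \sigma \rangle$ yields a short exact sequence
$$1 \to N \to \pi_{1}^{\mathrm{et}}(\overline{X}^{0}) \to G \to 1, \qquad N := \pi_{1}^{\mathrm{et}}(A^{0}) = \pi_{1}^{\mathrm{et}}(A).$$
The crucial structural fact I would prove is that $N$ is characteristic under endomorphisms of $\pi_{1}^{\mathrm{et}}(\overline{X}^{0})$ with finite-index image. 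Passing to the prime-to-$p$ quotient (so the argument is uniform in $\mathrm{char}\,K$), the eigenvalues of $\sigma$ on each Tate module $T_{\ell}(A) \otimes \Q_{\ell}$ are nontrivial $k$-th roots of unity for every $\ell \ne p$---this is precisely what makes $\Sing(\overline{X})$ isolated---so $N^{(p')}$ is the unique maximal closed abelian subgroup of finite index in $\pi_{1}^{\mathrm{et}}(\overline{X}^{0})^{(p')}$: a finite-index abelian subgroup with nontrivial image in $G$ would have to pointwise fix an infinite subgroup of the torsion-free $N^{(p')}$, which is impossible. Therefore $(\overline{f}^{0})_{*}(N) \subseteq N$, the covering-space lifting criterion produces $f_{A}^{0} : A^{0} \to A^{0}$ with $\pi^{0} \circ f_{A}^{0} = \overline{f}^{0} \circ \pi^{0}$, and Weil's extension theorem (rational maps from smooth varieties to abelian varieties are morphisms) extends this uniquely to $f_{A} : A \to A$ with $\pi \circ f_{A} = \overline{f} \circ \pi$.

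\textbf{Upgrade to automorphism and deduction of (2).} Applying Lemma \ref{lem21}(3) to $A$ (using $K_A = 0$) makes $f_A$ finite \'etale of constant degree $e$, so $|f_{A}^{-1}(S)| = e \cdot |S|$ for every finite $S \subset A$. Combining $\pi^{-1}(\Sing(\overline{X})) = A^{\langle \sigma \rangle}$ with Lemma \ref{lem24} gives
$$e \cdot |A^{\langle \sigma \rangle}| = |f_{A}^{-1}(A^{\langle \sigma \rangle})| = |\pi^{-1}(\overline{f}^{-1}(\Sing(\overline{X})))| = |A^{\langle \sigma \rangle}| > 0,$$
forcing $e = 1$ and proving (1). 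For (2), the relation $\pi \circ f_A = \overline{f} \circ \pi$ together with connectedness of $A$ yields a constant $j \in \Z/k$ with $f_{A}\circ\sigma=\sigma^{j}\circ f_{A}$, so $f_{A}$ descends to a bijective finite degree-one morphism $\overline{f}: \overline{X} \to \overline{X}$---hence an automorphism of the normal $\overline{X}$. As $\overline{f}$ permutes $\Sing(\overline{X})$, the universal property of the blow-up $\nu$ lifts $\overline{f}$ and $\overline{f}^{-1}$ to mutually inverse automorphisms of $X$ which agree with $f$ and $f^{-1}$ on $X \setminus \mathrm{Exc}(\nu)$; the same functoriality identifies $\Aut(X) = \Aut(\overline{X})$ under $\nu$.

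The main obstacle is the characteristic-subgroup statement for $N$: it rests decisively on $\sigma$ acting without nonzero fixed tangent vectors (equivalently on $\Sing(\overline{X})$ being isolated), which is the geometric content of the hypothesis $k \in \{2,3,4,6\}$, and in positive characteristic the argument must be carefully routed through the prime-to-$p$ quotient of $\pi_{1}^{\mathrm{et}}$.
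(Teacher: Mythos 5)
Your proof is correct, and the two halves split in an interesting way. The step where you upgrade $f_A$ to an automorphism (via Lemma \ref{lem21}(3), Remark \ref{rem22}(1) and the count $e\cdot|A^{\langle\sigma\rangle}| = |f_A^{-1}(A^{\langle\sigma\rangle})| = |A^{\langle\sigma\rangle}|$ using Lemma \ref{lem24}) and your deduction of (2) coincide with the paper's argument almost verbatim. The genuine divergence is in the \emph{construction} of $f_A$. The paper does this in one stroke by invoking \cite[Lemma 8.1 and its proof]{CMZ20}: since $\pi$ is Galois with group $\langle\sigma\rangle$ containing no nontrivial translations, $A$ is the Albanese closure of $\overline{X}$, so any $\overline{f}\in{\rm End}(\overline{X})$ lifts automatically. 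You instead rebuild this from scratch via $\pi_1^{\mathrm{et}}$: you identify $N=\pi_1^{\mathrm{et}}(A^0)=\pi_1^{\mathrm{et}}(A)$ (purity, $\codim\ge 2$) as the kernel of $\pi_1^{\mathrm{et}}(\overline{X}^0)\to G$, show that after passing to the prime-to-$p$ quotient $N$ is the unique maximal closed abelian finite-index subgroup --- precisely because $\sigma^j$ ($0<j<k$) has no eigenvalue $1$ on $T_\ell(A)$, which is also what makes $\Sing(\overline{X})$ isolated --- deduce $(\overline{f}^0)_*(N)\subseteq N$, apply the covering-space lifting criterion, and extend over the codimension-$\ge 2$ complement by Weil's theorem. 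Both routes are valid. Yours is longer and more technical but self-contained and makes explicit the geometric mechanism (no fixed tangent directions of $\sigma$) that the citation packages away; the paper's is shorter at the cost of outsourcing exactly this point. One small presentational gap: when you apply Lemma \ref{lem21}(3) to $A$ you implicitly need $f_A\in{\rm End}(A)$, i.e.\ separability of $f_A$; this does follow in your setup (the lift $f_A^0$ of the \'etale $\overline{f}^0\circ\pi^0$ through the \'etale $\pi^0$ is \'etale, hence generically separable), but it is worth saying --- the paper handles it by the tower law for separable extensions applied to $f_A^*\circ\pi^*=\pi^*\circ\overline{f}^*$.
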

\begin{proof} Since $\pi : A \to \overline{X}$ is Galois and the Galois group $\langle \sigma \rangle$ of $\pi$ does not contain non-trivial translations of $A$, the abelian variety $A$ is the Albanese closure of $\overline{X}$ (\cite[Lemma 8.1 and its proof]{CMZ20}). Therefore $\overline{f} \in {\rm End}\, (\overline{X})$ lifts to some $f_A \in {\rm End}\, (A)$, that is, there is an endomorphism $f_A$ such that $\pi \circ f_A = f \circ \pi$, by \cite[Lemma 8.1]{CMZ20}. Here we note that $f_A$ is separable as so are $\overline{f}$ and $\nu$. Indeed, for a finite field extension $L_1 \subset L_2 \subset L_3$, the extension $L_1 \subset L_3$ is separable if and only if so are $L_1 \subset L_2$ and $L_2 \subset L_3$.

Set $N := \pi^{-1}({\rm Sing}\, \overline{X})$. Then $N$ is a finite set and $\pi(N) = {\rm Sing}\, \overline{X}$ as $\pi$ is finite and surjective.
Since $\overline{f}^{\pm 1} ({\rm Sing}\, \overline{X}) =  {\rm Sing}\, \overline{X}$ by Lemma \ref{lem24}, it follows that $f_A^{\pm 1} (N) = N$. Since $f_A$ is separable and therefore necessarily finite and \'etale by Remark \ref{rem22} 
(1), it follows that $f_A$ is also of degree $1$ by $f_A^{\pm 1}(N) = N$. Therefore $f_A \in \Aut (A)$ by the Zariski main theorem. Thus, the finite separable surjective morphism $\overline{f}$ is also of degree one. Hence $\overline{f} \in \Aut (\overline{X})$ again by the Zariski main theorem, as $\overline{X}$ is normal. By Lemma \ref{lem23a}, we have $\Aut (X) \subset \Aut (\overline{X})$ by $\nu$. On the other hand, since $\nu : X \to \overline{X}$ is the blow up at the maximal ideals of ${\rm Sing}\, \overline{X}$, we have $\Aut (\overline{X}) \subset \Aut (X)$ by $\nu$. Thus $\Aut (X) = \Aut (\overline{X})$ as claimed. Therefore $f \in \Aut (X)$. This completes the proof of Lemma \ref{lem25}. \end{proof}

{\it Completion of the proof of Theorem \ref{mainthm1}.} Now we are ready to complete the proof of Theorem \ref{mainthm1}. By Lemma \ref{lem25} (2), we have that 
$${\rm End}\, (X) = \Aut (X) \simeq \Aut (\overline{X})$$ 
and that $\Aut (\overline{X})$ is isomorphic to $N(\langle \sigma \rangle) /\langle \sigma \rangle$, where $N(\langle \sigma \rangle) \subset {\rm Aut}\, (A)$ is the normalizer of $\langle \sigma \rangle$ in $\Aut (A)$. Thus we obtain the second statement of Theorem \ref{mainthm1} readily from the fact that 
$$\Aut (A) = A \rtimes {\Aut}_{{\rm group}}\, (A)$$ 
for an abelian variety $A$ \cite[Page 43, Corollary 1]{Mm70} and $\langle \sigma \rangle$ is in the center of ${\Aut}_{{\rm group}}\, (A)$ (by the shape of $\sigma$). This completes the proof of Theorem \ref{mainthm1}.

\section{Proof of Theorem \ref{mainthm2}}\label{sect3}

In this section, we show Theorem \ref{mainthm2}. We assume that the base field is $K = \overline{\Q}$ unless stated otherwise.

\subsection{Brief review on Kawaguchi-Silverman Conjecture (KSC)}\label{subsect31}
\hfill

We briefly recall some notion, basic tools and known facts on Kawaguchi-Silverman Conjecture (KSC), relevant to us. We restrict ourselves for ${\rm End}\, (V)$ of a normal projective variety $V$. 

Let $h_{\P^N} : \P^N = \P^N(\overline{\Q}) \to \R_{\ge 0}$ be the logarithmic Weil height (\cite[B1-B4, Part B]{HS00} or \cite[Appendix II]{Mm70} for the definition and basic properties). $h_{\P^N}$ depend on the choice of the projective linear coordinate of $\P^N$ but only up to bounded functions. If $H$ is an ample line bundle on $V$, then one has then an embedding 
$$V \subset |mH|^{*} = \P^{N}\,\, {\rm with}\,\, N = \dim |mH|$$
for large $m$. 
By adding suitable constant on $h_{\P^N}$, we may and will assume that $h_{\P^N}(x) \ge m$ and define $h_H$ by 
$$h_{H} : V = V(\overline{\Q}) \to \R_{\ge 0}\,\, ;\,\, x \mapsto \frac{h_{\P^{N}}(x)}{m}\,\, (\ge 1).$$
$h_H$ is defined up to bounded functions and we always choose $h_H$ so that $h_H(x) \ge 1$ for all $x \in V$.
\begin{definition}\label{def31} Let $V$ be a normal projective variety of dimension $d$ over $\overline{\Q}$ and $H$ an ample line bundle on $V$. Let $f \in {\rm End}\, (V)$. 
\begin{enumerate}
\item The first dynamical degree $d_1(f)$  is defined by
$$d_1(f) := \lim_{n \to \infty} ((f^n)^*H.H^{d-1})_V^{\frac{1}{n}} \ge 1.$$
\item The arithmeic degree $a_f(x)$ for $x \in X$ is defined by
$$a_f(x) := \lim_{n \to \infty} h_H(f^n(x))^{\frac{1}{n}} \ge 1.$$
\end{enumerate}
\end{definition}

We have the following:
\begin{theorem}\label{thm31} Let $V$ be a normal projective variety and $f \in {\rm End}\, (V)$. Then:
\begin{enumerate}
\item $d_1(f)$ is well-defined and satisfies $d_1(f) \ge 1$. Moreover, $d_1(f)$ coincides with the spectral radius of $f^*|_{N^1(V)_{\Q}}$. 
\item $a_f(x)$ ($x \in X$) is well-defined and $a_f(x) \ge 1$. Moreover, $a_f(x)$ does not depends on the choice of $H$, $m$ 
and bounded functions added to $h_H$.
\item $1 \le a_f(x) \le d_1(f)$ for all $x \in X$.
\end{enumerate}
\end{theorem}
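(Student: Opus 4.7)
The plan is to establish the three statements by assembling standard facts about dynamical and arithmetic degrees; this theorem is a review-type compilation, so the work consists of citing the appropriate sources and checking that the normalizations used here are compatible.

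For part (1), first check well-definedness of $d_1(f)$. The key input is the submultiplicativity estimate
\[
((f^{n+m})^{*}H \cdot H^{d-1})_{V} \le C_{H} \cdot ((f^{n})^{*}H \cdot H^{d-1})_{V} \cdot ((f^{m})^{*}H \cdot H^{d-1})_{V},
\]
which, together with Fekete's lemma applied to the logarithms, yields existence of the limit. Independence of the ample class $H$ follows because for two ample $H, H'$ one has $c_{1} H \le H' \le c_{2} H$ in the nef cone. The lower bound $d_{1}(f) \ge 1$ is immediate, since $(f^{n})^{*}H$ is nef and big so $((f^{n})^{*}H \cdot H^{d-1})_{V} \ge 1$. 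Finally the identification with the spectral radius of $f^{*}|_{N^{1}(V)_{\Q}}$ is standard: $f^{*}$ preserves the pseudo-effective cone in the finite-dimensional space $N^{1}(V)_{\R}$, and a Perron--Frobenius argument produces a nef eigenvector whose eigenvalue matches both the spectral radius and the above limit.

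Part (2) has two components. Independence of the choice of $H$, the embedding datum $m$, and bounded perturbations added to $h_{H}$ is routine: any two height functions $h_{H}$ and $h_{H'}$ attached to ample classes satisfy $c_{1} h_{H} - O(1) \le h_{H'} \le c_{2} h_{H} + O(1)$ with positive constants, so taking $n$-th roots of $h(f^{n}(x))$ kills both the additive $O(1)$ and the multiplicative constants in the limit; the same argument handles changing $m$. The normalization $h_{H} \ge 1$ immediately forces $a_{f}(x) \ge 1$ once the limit is known to exist. The genuinely non-trivial step is the existence of the limit (as opposed to a mere $\limsup$); this was conjectured in \cite{KS16b} and established by Matsuzawa \cite{Ma20} for all normal projective varieties and all $f \in \mathrm{End}(V)$, and the plan is to invoke that result here verbatim.

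For part (3), the bound $a_{f}(x) \ge 1$ is immediate from $h_{H} \ge 1$. The upper bound $a_{f}(x) \le d_{1}(f)$ is the Kawaguchi--Silverman inequality \cite[Theorem 3]{KS16a}, sharpened in \cite{Ma20}; its proof compares $h_{H}(f^{n}(x))$ with $((f^{n})^{*}H \cdot H^{d-1})_{V}$ up to lower-order terms via a Siu-type inequality on the nef cone combined with properties of canonical heights, and extracts a $\limsup$ bound which, in view of the existence statement from (2), upgrades to the bound on the limit $a_{f}(x)$.

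The main obstacle in any self-contained account is the existence of $a_{f}(x)$ as a genuine limit in (2); everything else either follows from the finite-dimensionality of $N^{1}(V)_{\Q}$ or is straightforward bookkeeping with height functions, so here the focus is to cite \cite{Ma20} cleanly and point out that the normalizations $h_{H}(x) \ge 1$ and $h_{\P^{N}}(x) \ge m$ make the three statements hold verbatim.
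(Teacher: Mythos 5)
Your proposal follows essentially the same route as the paper, which also treats this as a compilation theorem and simply cites Truong's argument (valid for normal projective varieties) for the spectral-radius characterization in (1), and Kawaguchi--Silverman \cite[Theorem 3]{KS16a} together with Matsuzawa \cite{Ma20} for (2) and (3); your Perron--Frobenius sketch and height-comparison bookkeeping are the standard contents of those references. One small attribution slip: the existence of the limit $a_f(x)$ for $f \in {\rm End}(V)$ is due to Kawaguchi--Silverman \cite[Theorem 3]{KS16a}, while \cite{Ma20} supplies the inequality $a_f(x) \le d_1(f)$ in the generality needed here, so the roles of the two references in your account of parts (2) and (3) are essentially swapped, though the mathematical content is unaffected.
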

The assertions (1) is well-known. The proof of \cite[Theorem 1.1]{Tr15} is valid without any change for a normal projective variety $V$ and an ample class $H$ if $f \in {\rm End}\, (V)$. The assertions (2) and (3) are due to Kawaguchi-Silverman \cite[Theorem 3]{KS16a} and \cite{Ma20} (See also a survey paper \cite{Ma23}).

The following known results are relevant for us:

\begin{theorem}\label{thm32} Let $V$ be a normal projective variety and $f \in {\rm End}\, (V)$. Then:
\begin{enumerate}
\item KSC holds for ${\rm End}\, (V)$ when $V$ is an abelian variety (\cite[Theorem 4]{KS16a}, \cite[Theorem 1.2]{Si17}).
\item KSC holds for ${\rm End}\, (V)$ when $\dim V \le 2$ (\cite[Theorem 5]{KS16b} when $\dim\, V=1$, and \cite[Theorem 1.3]{MZ22}, see also \cite{Ka08} and 
\cite{MSS18}, when $\dim V = 2$).
\item KSC holds for ${\rm End}\, (V)$ when $V$ is a smooth rationally connected variety with at least one inter amplified endomorphism (\cite[theorem 1.1]{MY22}, see also \cite{MZ22}, \cite{Yo21}).
\end{enumerate}
\end{theorem}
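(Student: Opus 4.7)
The plan is to address the three parts of Theorem~\ref{thm32} in turn; each ultimately rests on an interplay between Tate-style canonical height constructions and the spectral data of $f^*$ on $N^1(V)_{\Q}$.

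For part (1), I would exploit the structural fact that every $f \in {\rm End}\,(A)$ of an abelian variety $A$ decomposes as $f = t_c \circ \phi$ with $\phi$ an isogeny and $t_c$ translation by some $c \in A(\overline{\Q})$; since translations act trivially on $N^1(A)_{\R}$ one has $d_1(f) = d_1(\phi)$, namely the largest modulus of an eigenvalue of $\phi^*$. The arithmetic side is controlled by N\'eron--Tate canonical heights: after an $f$-equivariant splitting of a suitable ample symmetric line bundle $L$ into $\phi^*$-eigenspaces (at the level of $\mathrm{NS}(A) \otimes \bar{\Q}$), one obtains canonical heights $\hat h_i$ along each eigendirection, and shows that $a_f(x)$ equals the largest $|\alpha_i|$ for which the component of $x$ is nontorsion; Zariski density of $O_f(x)$ forbids $x$ from being contained in the pullback of any proper $\phi$-stable abelian subvariety, forcing the dominant eigenvalue component to be nontorsion and yielding $a_f(x) = d_1(f)$. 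For part (2), the one-dimensional case is immediate because a separable self-map has $d_1(f) = \deg f$ and $h(f(x)) = (\deg f) h(x) + O(1)$; for surfaces the argument proceeds via the Enriques--Kodaira classification of the minimal model of $V$, with $\kappa(V) = 2$ vacuous (since ${\rm End}(V)$ is finite, hence $d_1(f) = 1$), $\kappa(V) = 1$ reduced to the curve case through the $f$-equivariant Iitaka fibration and height functoriality, $\kappa(V) = 0$ handled either by (1) on an abelian \'etale cover or by a direct N\'eron--Tate construction on K3/Enriques/bielliptic covers, and $\kappa(V) = -\infty$ treated via an $f$-equivariant MMP on ruled and rational surfaces as in \cite{MSS18}.

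Part (3) is by far the deepest and is the main obstacle. The Meng--Yoshikawa strategy leverages the Meng--Zhang structural theory of int-amplified endomorphisms: a smooth rationally connected $V$ carrying an int-amplified $f_0$ admits an $f_0$-equivariant MMP terminating in a very restricted output (a toric-like $\Q$-Fano contraction target), from which one constructs a Tate-type canonical height $\hat h_0 = \lim d_1(f_0)^{-n} h_H(f_0^n(\cdot))$ whose vanishing locus is a proper closed subset. For an arbitrary $f \in {\rm End}(V)$, one compares the spectral data of $f^*$ and $f_0^*$ via a decomposition of $N^1(V)_{\R}$ compatible with the MMP output and shows that $\hat h_0$ (or a refinement along the eigenspace stratification) detects the dominant eigenvalue of $f^*$ at any point with Zariski-dense orbit. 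The principal difficulty, where the bulk of the work in \cite{MY22, MZ22, Yo21} lies, is establishing compatibility of a single $f_0$-based canonical height with an arbitrary (possibly non-commuting) $f$, controlling the fixed and periodic loci appearing in the equivariant MMP so that points with dense orbit must have maximal arithmetic degree, and ruling out pathological small-height points on the exceptional divisors of the MMP contractions.
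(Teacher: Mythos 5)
This statement is not proved in the paper at all: Theorem~\ref{thm32} is a compilation of results quoted from the literature, and the citations in the statement (\cite[Theorem 4]{KS16a}, \cite[Theorem 1.2]{Si17}, \cite[Theorem 5]{KS16b}, \cite[Theorem 1.3]{MZ22}, \cite[Theorem 1.1]{MY22}) \emph{are} the paper's proof. So there is no internal argument to compare yours against; the only meaningful question is whether your sketches are faithful to, and could substitute for, the cited proofs. They cannot: what you have written is a survey of strategies, not a proof, and you say so yourself for part (3), where you explicitly locate ``the bulk of the work'' in \cite{MY22}, \cite{MZ22}, \cite{Yo21}. Deferring the hardest step to the very references the theorem cites means the proposal adds nothing beyond the paper's own citation.

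On the parts you do sketch, the outlines are broadly consonant with the actual arguments but gloss over the genuinely delicate points. In part (1), the decomposition $f = t_c\circ\phi$ and the identity $d_1(f)=d_1(\phi)$ are fine, but the core of \cite{KS16a} and \cite{Si17} is not a clean ``eigenspace splitting of an ample $L$'': one must build canonical heights attached to \emph{nef} eigenclasses (and handle non-semisimple $f^*$ --- the Jordan blocks in the title of \cite{KS16a}), and the step ``Zariski density forces the dominant-eigenvalue component to be nontorsion'' is exactly where the nondegeneracy of the N\'eron--Tate pairing on the relevant quotient abelian variety has to be invoked carefully; as written this is an assertion, not an argument. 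In part (2), the $\kappa=1$ case is not simply ``reduced to the curve case through the Iitaka fibration'': controlling the arithmetic degree in the fiber direction is where \cite{MSS18} and \cite{MZ22} do real work. None of this is fatal as a summary, but if the intent was to supply a proof where the paper gives only references, the proposal does not achieve that; if the intent was to record that these are known external results, the paper's citations already do so more economically.
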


The following proposition or its variants are also known (\cite{Si17}, \cite{MSS18}, \cite{LS21}, \cite{CLO22}, \cite{Ma23} for several variants):

\begin{proposition}\label{prop31} Let $V$ be a normal projective variety and $f \in \Aut (V)$.
Let $\nu : V \to W$ be a surjective morphism to 
a normal projective variety $W$. Assume that there is $g \in \Aut (W)$ such that $g \circ \nu = \nu \circ f$. 
\begin{enumerate}
\item Assume that $\dim V = \dim W$ or $\dim V = \dim W +1$. Then, if KSC holds for $(W,g)$, then the KSC also holds for $(V,f)$.
\item Assume that $\dim V = \dim W$, that is, the morphism $\nu$ is generically finite. Then, if KSC holds for $(V,f)$, then the KSC also holds for $(W,g)$.
\end{enumerate}
\end{proposition}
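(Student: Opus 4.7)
The plan is to transfer the first dynamical degree and the arithmetic degree across the equivariant pair $(f,g)$. Fix ample classes $H_V$ on $V$ and $H_W$ on $W$. Surjectivity of $\nu$ makes $\nu^*\colon N^1(W)_\Q \hookrightarrow N^1(V)_\Q$ injective and intertwines $g^*$ with $f^*$; a standard spectral-radius comparison (the divisors contracted by $\nu$ when $\dim V=\dim W$, or the $1$-dimensional fibres of $\nu$, on which $f$ acts as an automorphism, when $\dim V=\dim W+1$) shows that the eigenvalues of $f^*$ outside $\nu^* N^1(W)_\Q$ have modulus $1$, yielding $d_1(f)=d_1(g)$. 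Combining the functorial identity $h_{H_W}\circ\nu = h_{\nu^* H_W}+O(1)$ with the estimate $h_{\nu^* H_W}\leq C\, h_{H_V}+O(1)$ (arising from $CH_V-\nu^* H_W$ being ample for large $C$) and taking $n$-th roots gives the descent inequality
$$a_g(\nu(x))\leq a_f(x)\qquad \text{for every } x\in V,$$
since the multiplicative constant $C$ is killed in the limit.

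For Part (1), pick $x\in V$ with $O_f(x)$ Zariski dense; then $\nu(O_f(x))=O_g(\nu(x))$ is dense in $W$ by surjectivity of $\nu$, and KSC for $(W,g)$ gives $a_g(\nu(x))=d_1(g)$. Chaining with descent and with Theorem \ref{thm31}(3),
$$d_1(g) = a_g(\nu(x)) \leq a_f(x) \leq d_1(f) = d_1(g),$$
which forces $a_f(x)=d_1(f)$.

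For Part (2), pick $y\in W$ with $O_g(y)$ Zariski dense and lift to some $x\in V$ with $\nu(x)=y$. The closure $Z:=\overline{O_f(x)}$ is $f$-invariant, $\nu(Z)$ is closed (properness of $\nu$) and $g$-invariant, and contains $O_g(y)$, hence $\nu(Z)=W$; since $\dim V=\dim W$ and $\nu$ is generically finite, $Z$ must equal $V$, so $O_f(x)$ is Zariski dense. KSC for $(V,f)$ then gives $a_f(x)=d_1(f)=d_1(g)$. Combined with descent, it suffices to prove the upward estimate $a_g(y)\geq a_f(x)$. Letting $E\subset V$ denote the proper closed locus where $\nu$ fails to be finite, on $V\setminus E$ the classes $H_V$ and $\nu^* H_W$ are comparable in both directions, giving $h_{H_V}(z)\leq C' h_{H_W}(\nu(z))+O(1)$ for $z\in V\setminus E$; Zariski density of $O_f(x)$ forces infinitely many iterates $f^{n_k}(x)\in V\setminus E$, along which the limit defining $a_f(x)$ is realised, and so $a_f(x)\leq a_g(y)$.

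The delicate point, and thus the main obstacle, is precisely this last upward height comparison in Part (2): when $\nu$ is generically finite but not finite, heights do not transfer pointwise in both directions, and one must argue (via iterating the estimate, or via a density argument for arithmetic degrees along the lines of \cite{MSS18}, \cite{LS21}, \cite{CLO22}) that the arithmetic degree of a Zariski-dense orbit is realised outside any fixed proper subvariety. In the applications of interest in this paper, $\nu$ will in fact be either finite (the quotient $\pi$, for which $\nu^* H_W$ is already ample) or birational (the blow-up, handled by Part (1)), so this obstruction does not occur.
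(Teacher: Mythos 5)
The paper dispatches this proposition with a single citation to \cite[Lemmas 3.1 and 3.2]{CLO22}, so you are reconstructing an argument the paper itself omits; the architecture of your reconstruction (intertwine heights and dynamical degrees through $\nu$, then chain with $a_f\le d_1(f)$) is indeed the one behind those lemmas. Two points deserve attention. First, in Part (1) your spectral-radius justification of $d_1(f)=d_1(g)$ is off in the fibration case $\dim V=\dim W+1$: the quotient $N^1(V)_\Q/\nu^*N^1(W)_\Q$ is \emph{not} spanned by $\nu$-contracted divisors (think of a ruled surface, where the extra class is a section), so ``eigenvalues of $f^*$ outside $\nu^*N^1(W)_\Q$ have modulus one'' is not the right mechanism. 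What you actually want is the product formula for relative dynamical degrees (Dinh--Nguy\^en; Truong over a general field), which gives $d_1(f)=\max\{d_1(g),\,d_1(f,\nu)\}=d_1(g)$ because $f$ is an automorphism and the relative dynamical degree over a curve fiber is the topological degree, namely $1$. Second, in Part (2) your sentence ``on $V\setminus E$ the classes $H_V$ and $\nu^*H_W$ are comparable in both directions'' is not a proof: height comparisons do not hold pointwise on a Zariski-open set by fiat. The correct route is that generic finiteness makes $\nu^*H_W$ big, so $m\,\nu^*H_W-H_V\sim_\Q D$ for some $m>0$ and effective $D$, whence $h_{H_V}(z)\le m\,h_{H_W}(\nu(z))+O(1)$ for $z\notin\mathrm{Supp}(D)$; combined with the fact that the limit defining $a_f(x)$ exists (Theorem \ref{thm31}(2)), so any subsequence along the dense orbit realises it, this gives $a_f(x)\le a_g(y)$. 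To your credit, you flag exactly this as the delicate step and correctly observe that in this paper's actual uses $\nu$ is either finite (so $\nu^*H_W$ is ample and the comparison is unconditional) or birational (handled by Part (1)), so the subtlety never arises. Your Zariski-density transfer arguments in both directions are correct as written.
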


\begin{proof} The assertions (1) and (2) are special cases of \cite[Lemmas 3.1 and 3.2]{CLO22}. Smoothness assumption there is not needed here as we assume that $f$, $g$, $\nu$ are morphisms here.
\end{proof}

Let $n$ be an integer such that $n \ge 2$ and let 
$M_{n} \in {\rm SL}\, (n, \Z)$
whose characteristic polynomial is
the minimal polynomial of a Pisot unit $a_n >1$ 
such that $\Q(a_n) = \Q(\sqrt[n]{2})$ (See \cite[Theorem 3.3]{Og19} and references therein). Recall that $A_{k}^{n} = E_{k}^{\times n}$ from Defintion \ref{def1}. Then we have a natural action ${\rm GL}\, (n, \Z)$ on the group variety $A_{k}^{n} = E_{k}^{\times n}$ and therefore an embedding ${\rm GL}\, (n, \Z) \subset \Aut_{{\rm group}} (A_{k}^{n})$. We set $f_n := M_{n} \in \Aut_{{\rm group}} (A_{k}^{n})$.

Then by \cite[Proposition 6.2]{CLO22}, we have the following:

\begin{proposition}\label{prop32}
$d_1(f_n) = a_n^2 >1$ and there is $z \in A_{k}^{n}$ such that $O_{f_n}(z)$ is Zariski dense in $A_{k}^{n}$.
\end{proposition}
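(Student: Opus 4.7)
The plan is to establish the two claims independently, essentially unpacking \cite[Proposition 6.2]{CLO22}.

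For the dynamical degree, I invoke Theorem \ref{thm31}(1) to reduce $d_1(f_n)$ to the spectral radius of $f_n^*$ on $N^1(A_k^n)_\Q$. Since $A := A_k^n = E_k^{\times n}$ is an abelian variety and $f_n$ is the group endomorphism induced by the integer matrix $M_n$, the differential of $f_n$ at the origin acts on $T_0 A \cong \C^n$ by $M_n$ itself (viewed as a complex matrix), so the eigenvalues of $f_n^*$ on $H^{1,0}(A)$ are precisely the $\Q$-conjugates of the Pisot unit $a_n$. Passing to $H^{1,1}(A)$, which receives $N^1(A)_\R$, the eigenvalues become the products $\lambda_i \overline{\lambda_j}$ of pairs of eigenvalues of $M_n$, whose maximum absolute value is $a_n^2$. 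A short check using a product polarization $H$ (whose iterated pullback $(f_n^k)^* H$ corresponds to $(M_n^k)^T M_n^k$ in the symmetric-matrix model of $NS(E_k^n)$ when $E_k$ is non-CM, with an analogous Hermitian model in the CM case) shows that this top eigenvalue is realized on $N^1(A)_\R$, yielding $d_1(f_n) = a_n^2$, which is $>1$ by the Pisot property of $a_n$.

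For existence of a Zariski dense orbit, the structural input is the irreducibility of the characteristic polynomial of $M_n$ (the minimal polynomial of $a_n$) of degree $n$ over $\Q$. I next verify this polynomial remains irreducible over ${\rm End}\,(E_k) \otimes \Q$, which is either $\Q$ or the imaginary quadratic field $\Q(\zeta_k)$ for $k \in \{3,4,6\}$ (or for $E_2$ with CM): since $\Q(a_n) = \Q(\sqrt[n]{2}) \subset \R$ while $\Q(\zeta_k) \setminus \Q$ lies outside $\R$, the intersection of the two fields is $\Q$, and linear disjointness over $\Q$ follows from the Galois property of cyclotomic extensions. Consequently $M_n$ admits no proper nonzero $({\rm End}\,(E_k) \otimes \Q)$-invariant subspace of $({\rm End}\,(E_k) \otimes \Q)^n$, and Poincar\'e reducibility converts this into the geometric assertion that $A$ has no proper positive-dimensional $f_n$-invariant abelian subvariety.

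With this in hand, the standard Zariski density criterion for group endomorphisms of abelian varieties over $\overline{\Q}$ (cf.\ \cite[Theorem 1.2]{Si17}) supplies the required $z$: the locus of points whose forward orbit fails to be Zariski dense is contained in a countable union of proper closed subvarieties, so any $\overline{\Q}$-point outside this union has the desired dense orbit. The most delicate step will be the translation between the matrix-theoretic irreducibility and the Poincar\'e-reducibility statement in the CM case; once this bridge is in place, the spectral radius computation on $N^1$ and the density argument on top of the no-invariant-subvariety property both follow along standard lines.
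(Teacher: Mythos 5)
Your dynamical-degree computation and the field-theoretic irreducibility argument are both sound, but the final density step has a genuine gap. You argue that the locus of points without Zariski dense orbit ``is contained in a countable union of proper closed subvarieties, so any $\overline{\Q}$-point outside this union has the desired dense orbit.'' Over $\overline{\Q}$ this reasoning fails: $\overline{\Q}$ is countable, so $A(\overline{\Q})$ is a countable set and is itself trivially a countable union of proper closed subvarieties (its points). One cannot conclude that a $\overline{\Q}$-point exists outside such a union. This is exactly the obstacle that Amerik's theorem \cite{Am11} removes, and it is what the paper invokes in the parallel situation of Lemma \ref{lem43}: one first produces a $\overline{\Q}$-point with \emph{infinite} $f_n$-orbit via Amerik, and then uses the absence of $f_n$-invariant proper positive-dimensional subvarieties to force the Zariski closure of that orbit to be all of $A$. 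The reference to \cite[Theorem 1.2]{Si17} is also misplaced: that is Silverman's KSC statement for abelian varieties, not a dense-orbit existence criterion. (The paper itself does not reprove Proposition \ref{prop32}; it cites \cite[Proposition 6.2]{CLO22} outright.)

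A second, smaller issue: passing from ``no $f_n$-invariant proper positive-dimensional \emph{abelian subvariety}'' (which your Poincar\'e-reducibility argument correctly establishes from irreducibility of the characteristic polynomial over $\mathrm{End}(E_k)\otimes\Q$) to ``the orbit closure is all of $A$'' requires an extra structural input, since a priori an $f_n$-invariant irreducible subvariety need not be a coset of an abelian subvariety. The cleaner route, again following the template of Lemma \ref{lem43}, is to check directly that $1$ is not an eigenvalue of $f_n^*$ on $H^{p,p}(A)$ for $1\le p\le n-1$ — the eigenvalues there are products of $p$ of the $\lambda_i$ with $p$ of the $\overline{\lambda_j}$, and none equals $1$ because $a_n$ is a Pisot unit — which rules out invariant cycles in every intermediate dimension and leaves ``finite or all of $A$'' as the only possibilities for the orbit closure.
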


\subsection{Proof of Theorem \ref{mainthm2}}\label{subsect32}
\hfill

Now we are ready to prove Theorem \ref{mainthm2}. Let $n \ge 3$ and $X := X_k^n$ be a variety of Ueno type. For ease of notation, we set $E := E_k$, $\sigma := \sigma_k^n$, $A = A_k^n = E^{\times n}$, $\overline{X} = \overline{X}_k^n$, $\overline{V}$, $\pi := \pi_k^n$, $\nu := \nu_k^n$ and $\mu := \mu_k$ the multiplicative group of order $k$. Then, under
$$X \xrightarrow{\nu} \overline{X} = A/\mu \xleftarrow{\pi} A,$$ 
any $f \in \Aut (X)$ descends to $\overline{f} \in \Aut (\overline{X})$ and $\overline{f}$ lifts to $f_A \in \Aut (A)$ by Lemma \ref{lem25}. 

Let $f \in \Aut(X)$. Recall that KSC hold for $f_A \in \Aut (A)$ by Theorem \ref{thm32} (1). Since $\pi$ is a finite surjective morphism, KSC holds for $\overline{f} \in \Aut(\overline{X})$ by Proposition \ref{prop31} (2). Since $\nu$ is birational morphism, KSC then holds for $f \in \Aut(X)$ by Proposition \ref{prop31} (1). This proves the first statement.

Let $f_n \in \Aut (A)$ and $z \in A$ as in Proposition \ref{prop32}. Then $f_n$ commutes with $\pi$ so that it descends to $f_{\overline{X}} \in \Aut (\overline{X})$ equivariantly with respect to $\pi$ such that $O_{f_{\overline{X}}}(\pi(z))$ is Zariski dense in $\overline{X}$. By Lemma \ref{lem25} (2), it follows that the automorphism $\overline{f}$ of $\overline{X}$ lifts to an automorphism of $X$ equivariantly with respect to $\nu$, which we denote by $f \in \Aut (X)$. By replacing $\pi(z)$ by $f_{\overline{X}}^m(\pi(z))$ for suitable $m$, we may and will assume that $\pi(z) \not\in {\rm Sing}\, (\overline{X})$. Then for the unique $x \in X$ such that $\nu(x) = \pi(z)$, the set $O_{f}(x)$ is Zariski dense in $X$. Moreover, since the dynamical degree is invariant under an equivariant generically finite map by \cite[Theorem 1.1]{DN11}, it follows that $d_1(f) = d_1(f_n) > 1$. Hence this $f \in \Aut (X)$ satisfies KSC non-trivially and non-vacuously. This completes the proof of Theorem \ref{mainthm2}.

\begin{remark}\label{rem31} 
By Theorem \ref{thm32} (2), KSC for ${\rm End}\, (X_{k}^{n})$ also holds for $n \le 2$.  
\end{remark}

\section{Proof of Theorem \ref{mainthm3}}\label{sect4}

\subsection{$c_2$-contractions on a Calabi-Yau threefold - Revisited}\label{subsect41}
\hfill

In this section, we briefly recall basic facts on $c_2$-contraction on a Calabi-Yau threefold from \cite{OS01} and \cite{Og24}. See also Introduction \ref{subsect13} for the definition of Calabi-Yau threefold, $c_2$-contraction on a Calabi-Yau threefold. We call a $c_2$-contraction $f : X \to W$ maximal if for any $c_2$-contraction $f' : X \to W'$ factors through $f$. Then $\Aut (X)$ preserves $f$. Our base field is either $\overline{\Q}$ or $\C$. All statements below are valid over both fields unless specified otherwise. Note that ${\rm End}\, (X) = \Aut (X)$ for a Calabi-Yau threefold $X$, by Remark \ref{rem22} (1) and by the simply connectedness assumption. 

The following two examples (\cite{Og96}, see also \cite[Theorem 3,7]{OS01}) play the most important role in this Section. 

\begin{example}\label{ex41}
\begin{enumerate}
\item Under the notation in Definition \ref{def1} (3), 
$$\nu_3 : X_3 := X_3^3 \to \overline{X}_3 := \overline{X}_3^3$$ 
is a birational $c_2$-contraction of a Calabi-Yau threefold $X_3$. Here we recall that $\overline{X}_3$ is the quotient variety of the abelian variety $A_3 := E_{\zeta_3}^{\times 3}$ by $\mu_3 = \langle \sigma_3^3 \rangle$:
$$\pi_3 : A_3 \to A_3/\mu_3 = \overline{X}_3.$$

\item Let $\zeta_7 := {\rm exp}(2\pi\sqrt{-1}/7)$ and $A_7 := {\rm Alb}\, (C)$ the Albanese variety of the Klein quartic curve
$$C := \{[x:y:z]\,|\, xy^3+yz^3+zx^3= 0\} \subset \P^2.$$
$A_7$ is an abelian threefold. Let $g_7 \in \Aut_{{\rm group}} (A_7)$ be the automorphism induced from the automorphism $g \in \Aut (C)$ given by 
$$g([x : y : z]) = [\zeta_7 x: \zeta_7^2 y :\zeta_7^4 z].$$ 
Then $g_7$ is of order $7$ and the action of $g_7$ on $H^0(\Omega_{A_7}^1)$ is 
represented by
$$\left(
    \begin{array}{ccc}
      \zeta_7 & 0 & 0\\
      0 & \zeta_7^2 & 0\\
      0 & 0 & \zeta_7^4
\end{array}
  \right)$$
under suitable basis of $H^0(\Omega_{A_7}^1)$. Let $\mu_7 = \langle g_7 \rangle$. Let $$\pi_7 : A_7 \to \overline{X}_7 := A_7/\mu_7$$ 
be the quotient morphism and 
$$\nu_7 : X_7 \to \overline{X}_7 := A_7/\mu_7$$ 
be a crepant projective resolution of $\overline{X}_7$, which is unique (\cite{Og96}). Then $X_7$ is a Calabi-Yau threefold and $\nu_7$ is a $c_2$-contraction of a Calabi-Yau threefold $X_7$. Note that the automorphism of order $3$ on $C$ defined by $[x:y:z] \mapsto [z:x:y]$ induces an automorphism $\sigma \in \Aut_{{\rm group}} (A_7)$ of order $3$ such that $\sigma^{-1} \circ g_7 \circ \sigma = g_7^2$.
\end{enumerate}
\end{example}

By the explicit descriptions in Example \ref{ex41}, both $X_3$ and $X_7$ are defined not only over $\C$ but also over $\overline{\Q}$. We cite the following from \cite{Og96} (see also \cite[Therem 3.4]{OS01}):

\begin{theorem}\label{thm41} Let $f : X \to W$ be a birational $c_2$-contraction of a Calabi-Yau threefold $X$. Then $f : X \to W$ is isomorphic either $\nu_3 : X_3 \to \overline{X}_3$ or $\nu_7 : X_7 \to \overline{X}_7$. Moreover, $f$ is a maximal $c_2$-contraction of $X$ in each case.
\end{theorem}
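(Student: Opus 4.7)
The strategy is to reduce the statement to the classification of quotients of abelian threefolds by finite groups acting with only isolated fixed points (as carried out in \cite{Og96}), and then verify uniqueness of the crepant resolution in each case together with maximality.

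First I would set up the geometry of the contraction. Since $X$ is a Calabi-Yau threefold and $f : X \to W$ is birational with connected fibers to a normal projective $W$, we have $f_*K_X \sim 0$ so $K_W \sim 0$ and crepancy $K_X = f^*K_W$ is automatic; in particular $W$ has only canonical Gorenstein singularities. Since $f$ is birational and $H$ is ample on $W$, the class $M := f^*H$ is nef and big. The $c_2$-contraction hypothesis $(c_2(X).M)_X = 0$, combined with Miyaoka's pseudo-effectivity of $c_2(X)$ on a minimal model with $K_X \equiv 0$, forces the bilinear form $c_2(X)\cdot(-)$ to vanish on the entire ray spanned by $M$.

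Next I would invoke the characterization (Og96, based on Yau's Ricci-flat metric theorem, Kobayashi's flatness theorem, and the Shepherd-Barron--Wilson analysis): a Calabi-Yau threefold $X$ carrying a nef and big class $M$ with $(c_2(X).M)_X = 0$ must, after contracting the $c_2$-null face of the nef cone, take the form $A/G$ with $A$ an abelian threefold and $G$ a finite group acting on $A$, preserving $H^0(\Omega_A^3)$, and acting freely in codimension one. Because $f$ is \emph{birational} and nontrivial, $W$ has positive-dimensional singular locus or isolated quotient singularities, and $G$ must contain non-identity elements with only isolated fixed points on $A$. Writing such an element in terms of its eigenvalues $(\zeta^{a_1},\zeta^{a_2},\zeta^{a_3})$ on the universal cover, none of them equals $1$, while the condition $K_{A/G}\sim 0$ imposes $\zeta^{a_1+a_2+a_3}=1$.

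The heart of the argument, and the main obstacle, is the Reid--Tai--Roan style classification from \cite{Og96}: together with the simple-connectedness required of the Calabi-Yau crepant resolution, the only pairs $(A,G)$ that arise are $(E_{\omega}^{\times 3},\mu_3)$ with $\mu_3=\langle \omega\cdot \id\rangle$, giving $\overline{X}_3$, and $(A_7,\mu_7)$ with $\mu_7$ acting with weights $(\zeta_7,\zeta_7^2,\zeta_7^4)$, giving $\overline{X}_7$; the latter is rigid because only the Jacobian of the Klein quartic admits a faithful $\Z/7$-action of that eigenvalue type. In each case the singularities of $A/G$ are isolated terminal cyclic quotients of type $\frac{1}{3}(1,1,1)$ (respectively $\frac{1}{7}(1,2,4)$), and for each type the crepant resolution is unique (blow-up of the maximal ideals of the singular points in the first case; the explicit resolution of \cite{Og96} in the second). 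Hence $f$ is isomorphic to $\nu_3$ or to $\nu_7$.

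Finally, for maximality I would argue that the above analysis, applied to any other birational $c_2$-contraction $f':X\to W'$, identifies $(f')^*H'$ as lying on the same $c_2$-null ray as $\nu_k^*H_{\overline{X}_k}$ ($k\in\{3,7\}$). Since the $c_2$-trivial face of the nef cone of $X_k$ is one-dimensional, $(f')^*H'$ is proportional to $\nu_k^*H_{\overline{X}_k}$, and the contraction theorem for normal projective varieties forces $f'$ to factor through $\nu_k$. This yields the claimed maximality in each case.
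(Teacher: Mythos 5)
The paper does not prove Theorem~\ref{thm41}; it is cited verbatim from \cite{Og96} (see also \cite[Theorem 3.4]{OS01}). Your classification sketch does follow the broad strategy of those references: pass from the nef--big $c_2$-trivial class $f^*H$, via Yau's Ricci-flat metric and the Kobayashi/Shepherd-Barron--Wilson flatness analysis, to a crepant resolution of an abelian-threefold quotient $A/G$ with $G$ acting freely in codimension one and preserving the $3$-form, and then pin down the possible pairs $(A,G)$ by the Reid--Tai eigenvalue conditions (no eigenvalue $1$ since $f$ is birational, product of eigenvalues $1$ since $K$ is preserved), landing on $\frac{1}{3}(1,1,1)$ over $E_\omega^{\times 3}$ and $\frac{1}{7}(1,2,4)$ over $\operatorname{Jac}$ of the Klein quartic.

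The maximality argument, however, contains a genuine error. You claim the $c_2$-trivial face of $\operatorname{Nef}(X_k)$ is one-dimensional, so that $(f')^*H'$ must be proportional to $\nu_k^*H$. This is false. For $X_3 = X_3^3$ one has $\rho(\overline{X}_3) = 9$ (since $E_\omega$ has CM, $\operatorname{NS}(E_\omega^{\times 3})$ has rank $9$ and the scalar $\mu_3$-action on it is trivial), and the $c_2$-trivial face of $\operatorname{Nef}(X_3)$ is the entire cone $\nu_3^*\operatorname{Nef}(\overline{X}_3)$, of dimension $9$, not $1$; similarly $\rho(\overline{X}_7) = 3$. The correct statement is the weaker one that the $c_2$-null face of $\operatorname{Nef}(X_k)$ equals $\nu_k^*\operatorname{Nef}(\overline{X}_k)$: since $c_2(X_k)$ (as a pseudoeffective $1$-cycle) is supported on the exceptional $\P^2$'s, a nef class orthogonal to $c_2(X_k)$ restricts to zero on each exceptional $\P^2$ and hence descends through $\nu_k$. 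From this face description it follows that for any $c_2$-contraction $f'$ the class $(f')^*H'$ lies in $\nu_k^*\operatorname{Nef}(\overline{X}_k)$, so $f'$ factors through $\nu_k$ by the contraction theorem, giving maximality without any one-dimensionality claim. You should replace the proportionality step by this face-equality argument.
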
 

In order to prove Theorem \ref{mainthm3}, we also need the following structure theorem of $\Aut(X_i)$ ($i=3$ and $7$) in Theorem \ref{thm41} and Example \ref{ex41}, which may have its own interest:

\begin{theorem}\label{thm42} Let $X_3$ and $X_7$ be as in Example \ref{ex41}. Then, under the notation of Example \ref{ex41}, the following (1) and (2) hold: 
\begin{enumerate}
\item For $X_3$, we have
$$\Aut (X_3) = A_3^{\mu_3} \rtimes \Aut_{{\rm group}}(A_3)/\mu_3 \simeq (\Z/3)^{\oplus 3} \rtimes {\rm GL}\, (3, \Z[\zeta_3])/\mu_3,$$ 
and there is $g \in \Aut\, (X_3)$ such that $d_1(g) > 1$ and $O_g(x)$ for some $x \in X_3$ is Zariski dense in $X_3$ both over $\overline{\Q}$ and over $\C$. 
\item For $X_7$, we have 
$$\Aut (X_7) \simeq (\Z/7) \rtimes (\Z[\zeta_7]^{\times} \rtimes \Z/3)/\mu_7 \simeq (\Z/7) \rtimes ((\Z/2 \oplus \Z^{\oplus 2}) \rtimes \Z/3).$$ 
In particular $\Aut (X_7)$ is a virtually abelian group, that is, $\Aut (X_7)$ has a finite index abelian subgroup. There is also $g \in \Aut\, (X_7)$ such that $d_1(g) > 1$ and $O_g(x)$ for some $x \in X_7$ is Zariski dense in $X_7$ both over $\overline{\Q}$ and over $\C$. 
\end{enumerate}
\end{theorem}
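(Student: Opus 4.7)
Part (1) is essentially a direct specialization of Theorems \ref{mainthm1} and \ref{mainthm2} to $n=3$, $k=3$: by Definition \ref{def1}(3) the Ueno-type variety $X_3^3$ is the $X_3$ of Example \ref{ex41}, so the asserted group structure, and the existence of $g \in \Aut(X_3)$ with $d_1(g)>1$ admitting a Zariski dense orbit, are immediate. The real content of Theorem \ref{thm42} is therefore part (2).

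For the structural statement in (2), the plan is to imitate the three-step reduction used to prove Theorem \ref{mainthm1}. First, although $X_7$ is not of Ueno type, $\nu_7$ is intrinsically defined (either as the unique crepant projective resolution of $\overline{X}_7$, cf.\ Example \ref{ex41}(2), or as the maximal $c_2$-contraction by Theorem \ref{thm41}), hence preserved by every $f \in \Aut(X_7)$; this gives $\Aut(X_7) \cong \Aut(\overline{X}_7)$. Second, since $\mu_7 = \langle g_7\rangle$ contains no translations, $A_7$ is the Albanese closure of $\overline{X}_7$, and \cite[Lemma 8.1]{CMZ20} (as used in Lemma \ref{lem25}) identifies $\Aut(\overline{X}_7) \cong N_{\Aut(A_7)}(\mu_7)/\mu_7$; combined with $\Aut(A_7) = A_7 \rtimes \Aut_{\mathrm{group}}(A_7)$ from \cite[p.\,43, Cor.\,1]{Mm70}, one obtains $N_{\Aut(A_7)}(\mu_7) = A_7^{\mu_7} \rtimes N_{\Aut_{\mathrm{group}}(A_7)}(\mu_7)$. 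Third, I compute the two factors. The translation contribution $A_7^{\mu_7} = \ker(1-g_7)$ has order $|(1-\zeta_7)(1-\zeta_7^2)(1-\zeta_7^4)|^2 = N_{\Q(\zeta_7)/\Q}(1-\zeta_7) = 7$, yielding the $\Z/7$ factor. For the group-automorphism contribution, I use the classical identification $\mathrm{End}^0(A_7) = \Q(\zeta_7)$ (the Klein quartic Jacobian has CM by this full CM field of degree $2\dim A_7 = 6$), giving $\Aut_{\mathrm{group}}(A_7) \supseteq \Z[\zeta_7]^\times$; adjoining the outer $\sigma$ of Example \ref{ex41}, with $\sigma^{-1}g_7\sigma = g_7^2$, produces $\Z[\zeta_7]^\times \rtimes \langle\sigma\rangle \subseteq N_{\Aut_{\mathrm{group}}(A_7)}(\mu_7)$. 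The reverse inclusion is the delicate point: a normalizing element conjugates $g_7$ to some $g_7^k$, and a $k$ of even order in $(\Z/7)^\times$ would, through a suitable power, realize the complex conjugation $\zeta_7 \mapsto \zeta_7^{-1}$, which swaps the two CM types of $\Q(\zeta_7)$ and cannot be lifted holomorphically; this forces $k \in \{1,2,4\}$, the order-$3$ subgroup already realized by $\sigma$. Quotienting by $\mu_7 \subseteq \Z[\zeta_7]^\times$ and using Dirichlet's unit theorem ($r_1=0$, $r_2=3$, torsion $\mu_{14}$) to write $\Z[\zeta_7]^\times/\mu_7 \cong \Z/2 \oplus \Z^{\oplus 2}$ yields the stated isomorphism; virtual abelianness follows from the finite-index abelian subgroup $\Z^{\oplus 2}$.

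For the dynamical statement of (2), I pick any non-torsion unit $u \in \Z[\zeta_7]^\times$ (e.g.\ a cyclotomic unit). Multiplication by $u$ defines $f_u \in \Aut_{\mathrm{group}}(A_7)$ commuting with $g_7$, hence descends to $\bar f \in \Aut(\overline{X}_7)$ and lifts to $f \in \Aut(X_7)$ via the identifications above. Invariance of dynamical degree under the equivariant generically finite $\pi_7$ and the birational $\nu_7$ (\cite[Theorem 1.1]{DN11}) gives $d_1(f) = d_1(f_u) = \max_i |u^{(i)}|^2$, where $u^{(i)}$ are the three CM-type embeddings of $u$; the product formula forces $\max_i |u^{(i)}| > 1$. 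For the Zariski dense orbit, $A_7$ is a simple abelian variety (its CM field equals $\Q(\zeta_7)$ of degree $2\dim A_7$), so every proper $f_u$-invariant closed subvariety of $A_7$ consists of preperiodic points; any $z \in A_7(\overline{\Q})$ outside this countable set and avoiding the ramifications of $\pi_7$ and $\nu_7$ yields a Zariski dense $O_f(x)$ in $X_7$ for its image $x$, and the construction descends from $\C$ to $\overline{\Q}$ because $X_7$ and $f$ are defined over $\overline{\Q}$.

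The main obstacle I anticipate is Step 3 of the structural argument: pinning down $\mathrm{End}^0(A_7) = \Q(\zeta_7)$ and ruling out unexpected outer actions on $\mu_7$ beyond $\sigma$ requires the CM-type obstruction handled carefully. The other ingredients, including the Lemma \ref{lem25}-style Albanese lift, the simplicity-based Zariski density argument, and the dynamical-degree invariance, are routine adaptations of the machinery already developed in Sections \ref{sect2}--\ref{sect3}.
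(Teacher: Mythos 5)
Your structural analysis of $\Aut(X_7)$ follows the same three-step reduction as the paper: pass to $\Aut(\overline X_7)$ via the unique crepant resolution, identify this with $N_{\Aut(A_7)}(\mu_7)/\mu_7$ via the Albanese closure and \cite[Lemma 8.1]{CMZ20}, split off the translation part $A_7^{\mu_7}\cong\Z/7$, and recognize the outer $\Z/3$ coming from $\sigma$ (the paper's $\tau$) together with the centralizer $C_{\rm group}(\mu_7)=\Z[\zeta_7]^\times$. Your exclusion of normalizing elements sending $g_7$ to $g_7^k$ with $k$ of even order in $(\Z/7)^\times$ via CM-type preservation is a correct rephrasing of the paper's argument matching eigenvalues on $H^0(\Omega^1_{A_7})$.

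There are, however, two genuine problems, one factual and one logical, both affecting the Zariski-density argument. First, $A_7=\mathrm{Jac}(C)$ for the Klein quartic $C$ is \emph{not} simple: it is isogenous to $E^{\times 3}$ for an elliptic curve $E$ with CM by $\Q(\sqrt{-7})$, so $\mathrm{End}^0(A_7)\cong M_3(\Q(\sqrt{-7}))$, not $\Q(\zeta_7)$. Your identification $C_{\rm group}(\mu_7)=\Z[\zeta_7]^\times$ is nevertheless correct, but for a different reason than you give: one should argue either via the double-centralizer theorem (since $\Q(\zeta_7)$ is a maximal commutative subalgebra of $M_3(\Q(\sqrt{-7}))$ and $\Z[\zeta_7]$ is the maximal order) or, as the paper does, via the explicit isomorphism $H^1(A_7,\Z)\cong\Z[\zeta_7]$ of $\Z[\zeta_7]$-modules from \cite{Og96}, on which $C_{\rm group}(\mu_7)$ acts faithfully $\Z[\zeta_7]$-linearly. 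Second, even granting simplicity (which fails here), the assertion that ``every proper $f_u$-invariant closed subvariety of $A_7$ consists of preperiodic points'' is not justified: an irreducible $f_u$-periodic curve or surface in an abelian threefold need not consist of preperiodic points, and your argument gives no mechanism excluding the Zariski closure of an orbit from being such a subvariety. The paper closes this gap differently: for the explicit unit $f=g_7+\id\in\Z[\zeta_7]^\times$ it computes the eigenvalues of $f^*$ on $H^{1,1}(A_7)$ and $H^{2,2}(A_7)$, namely $(1+\zeta_7^a)\overline{(1+\zeta_7^b)}$ with $a,b\in\{1,2,4\}$, observes that $1$ (indeed any root of unity) is not among them, concludes there is no $f$-invariant divisor or curve, so a Zariski closure of an orbit is finite or all of $A_7$, and finally invokes Amerik's theorem to produce a non-preperiodic $x\in A_7(\overline\Q)$. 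You should replace your simplicity argument with this (or an equivalent) eigenvalue computation for whatever unit $u$ you choose, and you will additionally need $u$ to generate $\Q(\zeta_7)$ to guarantee no $u$-invariant sub-tori, since $A_7$ itself has plenty of abelian subvarieties.
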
 

\begin{proof} The assertion (1) follows from Theorem \ref{mainthm1} and Proposition \ref{prop32}. 

Let us show the assertion (2). Since $\nu_7 : X_7 \to \overline{X}_7$ is a maximal $c_2$-contraction and $\nu_7$ is also a unique projective crepant resolution of $\overline{X}_7$, we have $\Aut (X_7) \simeq \Aut (\overline{X}_7)$ under $\nu_7$. Since $\mu_7$ contains no non-trivial translation, it follows that $\pi_7 : A_7 \to \overline{X}_7$ is the Albanese reduction \cite[Lemma 8.1]{CMZ20}. In particular, any element of $\Aut (\overline{X}_7)$ lifts to $\Aut (A_7)$ equivariantly with respect to $\pi_7$ by \cite[Lemma 8.1]{CMZ20}. Hence $\Aut (\overline{X}_7)$ is isomorphic to $N(\mu_7)/\mu_7$, where $N(\mu_7)$ is the normalizer of $\mu_7$ in $\Aut (A_7)$. Let $C(\mu_7)$ is the centralizer of $\mu_7$ in $\Aut (A_7)$. We also denote by $N_{{\rm group}}(\mu_7)$ (resp. $C_{{\rm group}}(\mu_7)$) the normalizer (resp. centralizer) of $\mu_7$ in $\Aut_{{\rm group}} (A_7)$. 

\begin{lemma}\label{lem41} $N(\mu_7) \simeq \Z/7 \rtimes N_{{\rm group}}(\mu_7) = \Z/7 \rtimes (C_{{\rm group}}(\mu_7) \rtimes \Z/3)$. 
\end{lemma}

\begin{proof} Let $h \in N(\mu_7)$ and decompose $h$ as $h = t_a \circ h_0$, where $h_0 \in {\rm Aut}_{{\rm group}}(A_7)$ and $t_a$ is the translation by $a \in A_7$. Then for $x \in A_7$ we have 
$$h^{-1} \circ g_7 \circ h (x) =  h_0^{-1}(g_7(h_0(x)) + g_7(a) -a) = h_0^{-1} \circ g_7 \circ h_0 (x) + h_0^{-1}(g_7(a) -a),$$
and therefore
$$h^{-1} \circ g_7 \circ h = t_{h_0^{-1}(g_7(a) -a)} \circ (h_0^{-1} \circ g_7 \circ h_0).$$
As the left hand side is in $\mu_7$, it follows that  $h_0^{-1}(g_7(a) - a) = 0$ and $h_0^{-1} \circ g_7 \circ h_0 \in \mu_7$. From $h_0^{-1}(g_7(a) - a) = 0$, we have $g_7(a) = a$. This means that $a \in A_7^{\mu_7} \simeq \Z/7$ and we obtain $N(\mu_7) \simeq \Z/7 \rtimes N_{{\rm group}}(\mu_7)$. 

It remains to show that $N_{{\rm group}}(\mu_7) \simeq  C_{{\rm group}}(\mu_7) \rtimes \Z/3$.

Note that $\Aut_{{\rm group}} (\mu_7) \simeq \Z/6$. Combining $h_0^{-1} \circ g_7 \circ h_0 \in \mu_7$ with the fact that $h_0^{-1} \circ g_7 \circ h_0$ and $g_7$ has the same eigenvalues on $H^0(\Omega_{A_7}^1)$, it follows that $h_0^{-1} \circ g_7 \circ h_0$ is either one of $g_7$, $g_7^2$, $g_7^4$. Hence $h_0^3 \circ g_7 = g_7 \circ h_0^3$. Since the action on $H^0(\Omega_{A_7}^1)$ of $\Aut_{{\rm group}}(A_7)$ is faithful, it follows that $h_0^{3} \in C_{{\rm group}}(\mu_7)$. Moreover, since we have $\sigma^{-1} \circ g_7 \circ \sigma = g_7^2$ and $\sigma$ is of order $3$, where $\sigma$ is the automorphism in Example \ref{ex41} (2), it follows that 
$$N_{{\rm group}}(\mu_7) = C_{{\rm group}}(\mu_7) \rtimes \langle \sigma \rangle \simeq C_{{\rm group}}(\mu_7) \rtimes \Z/3,$$
as claimed.
\end{proof}

Next we compute $C_{{\rm group}}(\mu_7)$.

\begin{lemma}\label{lem42} Under $\mu_{14} \simeq \Z/14$, we have
$$C_{{\rm group}}(\mu_7) \simeq \Z[\zeta_7]^{\times} \simeq \Z/14 \oplus \Z^{\oplus 2}.$$
\end{lemma}

\begin{proof} We regard $\overline{\Q} \subset \C$ and denote by $(A_7)_{\overline{\Q}}$ and $(A_7)_{\C}$) if $A_7$ is considered as a $\overline{\Q}$-variety and a $\C$-variety respectively. Since $\Aut_{{\rm group}} ((A_7)_{\overline{\Q}}/\overline{\Q})$ is locally of finite type and of dimension $0$ as a scheme and $\overline{\Q}$ is algebraically closed, by the Hilbert zero point theorem, the group of $\C$-valued points is the same as the group of $\overline{\Q}$-valued points. Therefore
$$\Aut_{{\rm group}} ((A_7)_{\overline{\Q}}/\overline{\Q}) = \Aut_{{\rm group}} ((A_7)_{\C}/\C),$$ 
as abstract groups. So, it suffices to prove Lemma \ref{lem42} 
for $(A_7)_{\C}$. 

In what follows, $A_7 = (A_7)_{\C}$ and also consider $A_7$ as a complex torus. Then, under the identification $\langle g_7 \rangle = \langle \zeta_7 \rangle$ given by $g_7 \leftrightarrow \zeta_7$, we have 
$$\Z[\zeta_7]^{\times} \subset \Aut_{{\rm group}} (A_7)\,\, ,\,\, {\rm hence}\,\, ,\,\,  \Z[\zeta_7]^{\times} \subset C_{{\rm group}}(\mu_7).$$ 
On the other hand, by \cite{Og96}, we have also $H^1(A_7, \Z) \simeq \Z[\zeta_7]$ as $\Z[\zeta_7]$-modules with $\Z[\zeta_7]$-action on the left hand side is given by the identification $g_7 = \zeta_7$ above. Since $\Aut_{{\rm group}} (A_7)$ acts faithfully on $\Z$-module $H^1(A_7, \Z)$, it follows that $C_{{\rm group}}(\mu_7)$ also acts faithfully on $H^1(A_7, \Z)$. The action is not only as $\Z$-module but also as $\Z[\zeta_7]$-module by the definition of $C_{{\rm group}}(\mu_7)$. Thus 
$$C_{{\rm group}}(\mu_7) \subset \Z[\zeta_7]^{\times}$$
as well. Hence $C_{{\rm group}}(\mu_7) = \Z[\zeta_7]^{\times}$ as claimed. 

Let us compute the abelian group $\Z[\zeta_7]^{\times}$. If $x \in \Z[\zeta_7]^{\times}$ is of finite order, then  $x$ is a root of $1$ in $\Z[\zeta_7]$. Thus the torsion subgroup of $\Z[\zeta_7]^{\times}$ is $\mu_{14}$. Since $\Q(\zeta_7)$ has exactly $6$ different embeddings to $\C$ and no embedding to $\R$, by the Dirichlet unit theorem, the rank of $\Z[\zeta_7]^{\times}$ is $6/3 -1 = 2$. Hence $\Z[\zeta_7]^{\times} \simeq \Z/14 \oplus \Z^{\oplus 2}$ as claimed. This completes the proof of Lemma \ref{lem42}.
\end{proof}

By Lemmas \ref{lem41} and \ref{lem42}, we obtain the first two assertions of (2). 

It remains to show the last assertion of (2). Setting $\id_{A_7} = \id$, 
we have 
$$f := g_7 + \id \in {\rm End}_{{\rm group}}(A_7).$$
Then $f \in C_{{\rm group}} (\mu_7)$, as 
$$(g_7+ \id)(g_7^5 +g_7^3 +g_7) = g_7^6 + g_7^5+g_7^4+g_7^3+g_7^2 + g_7 = -\id\,\, ,\,\, (g_7+ \id) \circ g_7 = g_7 \circ (g_7+ \id)$$
in ${\rm End}_{{\rm group}}(A_7)$. 

Thus $f$ descends to $g \in {\rm Aut}\, (X_7)$.

The next lemma completes the proof of the last statement of (2), as the natural map $A_7 \dasharrow X_7$ is dominant and generically finite and $d_1(g) = d_1(f)$ by \cite[Theorem 1.1]{DN11}.

\begin{lemma}\label{lem43}
$d_1(f) > 1$ and $f$ has a Zariski dense orbit on $A_7$.
\end{lemma}

\begin{proof} 
Since the eigenvalues of $f$ on $H^0(\Omega_{A_7}^1)$ are 
$$\zeta_7 +1\,\, ,\,\, \zeta_7^2 +1\,\, ,\,\, \zeta_7^4+1,$$
it follows that the eigenvalues of the action of $f$ on $H^{1,1}(A_7)$ are
$$(1+\zeta_7^a)\overline{(1+\zeta_7^b)}\,\, {\rm with}\,\, a, b \in \{1,2, 4\}$$Therefore, 
$$d_1(f) =  |\zeta_7 +1|^2 >1.$$
In particular, $f$ is of infinite order. Since $1$ is not an eigenvalue of $f^*$ on $H^{1,1}(A_7)$ and on $H^{2,2}(A_7)$ by the description above, it follows that $f$ admits no invariant divisors and curves. Thus the Zariski closure of $O_f(x)$ ($x \in A_7$) has to be either a finite set of points or the whole $A_7$. Since $f$ is of infinite order, it follows then that the Zariski closure of $O_f(x)$ for some $x \in A_7$ is the whole $A_7$, as $O_f(x)$ is an infinite set for general $x \in A_7$ even over $\overline{\Q}$ by Amerik \cite{Am11}. This completes the proof of Theorem \ref{thm41}.   
\end{proof}

This completes the proof of Theorem \ref{thm42}.
\end{proof}

\subsection{Completion of the proof of Theorem \ref{mainthm3}}\label{subsect42}
\hfill

We are now ready to prove Theorem \ref{mainthm3}. As remarked at the beginning of Subsection \ref{subsect41}, we know that ${\rm End}\, (X) = \Aut\, (X)$ for a Calabi-Yau threefold $X$. Let $X$ be a Calabi-Yau threefold defined over $\overline{\Q}$ with a non-trivial $c_2$-contraction and $f : X \to W$ a maximal $c_2$-contraction of $X$. 
Then $\dim\, W = 1$, $2$ or $3$. Our proof of the cases $\dim\, W = 1$, $2$ below is the same as in \cite[Theorem 1.8]{LS21}. These two cases also follow from \cite[Theorem 1.4]{Li24}.

Let $g \in \Aut (X)$ and $g_W \in \Aut (W)$ induced from $g$ via the maximality of $f$.

Consider first the case where $\dim\, W = 1$. Then $W = \P^1$ and by \cite{VZ01}, the critical set $N \subset \P^1$ of $f$ satisfies $|N| \ge 3$. Then $g_W^{|N|!}$ is identity on $N$ and therefore $g_W^{|N|!}$ is identity on $\P^1$ by $|N| \ge 3$. Hence $g$ has no Zariski dense orbit and KSC holds vacuously in this case. 

Consider next the case where $\dim\, W = 2$. Then $(W, g_W)$ satisfies KSC as $\dim W = 2$ by Theorem \ref{thm32} (2). Thus $(X, g)$ satisfies KSC as well by Proposition \ref{prop31} (1). 

Consider the case where $\dim W = 3$. Then we have an identification $X = X_i$ ($i=3$, $7$) and morphisms
$$X_i \xrightarrow{\nu_i} \overline{X}_i = A_i/\mu_i \xleftarrow{\pi_i} A_i$$
as in Example \ref{ex41}. Let $h \in \Aut (X_i)$. Then $h$ descends to $\overline{h} \in \Aut (\overline{X})$ by the maximality of $\nu_i$ and lifts to $\tilde{h} \in \Aut (A_i)$, as $A_i$ is the Albanese closure of $\overline{X}_i$ (\cite[Lemma 8.1]{CMZ20}). Since KSC holds for $(A_i, \tilde{h})$ by Theorem \ref{thm32} (1), it follows that KSC holds for $(\overline{X}_i, \overline{h})$ by Proposition \ref{prop31} (2). Thus, KSC also holds for $(X_i, h)$ by Proposition \ref{prop31} (1). KSC also holds non-trivially and non-vacuously for $\Aut (X_i)$ by Theorem \ref{thm42}. This completes the proof of Theorem \ref{mainthm3}.

%%%%%%%%

\end{document}